\documentclass[12pt,reqno]{amsart}
\usepackage{graphicx}
\usepackage{hyperref}
\usepackage{psfrag}
\usepackage{pxfonts}
\usepackage{mathrsfs}
\usepackage{color}
\usepackage{amsmath,amssymb}
\usepackage{tikz}

\numberwithin{equation}{section}

\theoremstyle{plain}
\newtheorem{maintheorem}{Theorem}

\newcommand{\N}{\mathbb{N}}
\newcommand{\Z}{\mathbb{Z}}

\newcommand{\T}{\mathbb{T}}

\newtheorem{theorem}{Theorem}[section]

\newtheorem{conjecture}[theorem]{Conjecture}
\newtheorem{corollary}[theorem]{Corollary}
\newtheorem{proposition}[theorem]{Proposition}
\newtheorem{lemma}[theorem]{Lemma}
\newtheorem{definition}[theorem]{Definition}

\theoremstyle{remark}

\begin{document}

\thanks{The first author was supported by Capes, process PRAPG n. 88887.196178/2025-00  and  would like to express his sincere gratitude to his former M.Sc. advisor, Professor Nivaldo Costa Muniz, for his guidance and encouragement during his master's studies. The idea of this work originated from the studies  obtained under his supervision.}

\author{J. Santana C. Costa}
\address{DEMAT-UFMA S\~{a}o Lu\'{i}s-MA, Brazil.}
\email{jsc.costa@ufma.br}

\author{F. Micena}
\address{
  IMC-UNIFEI Itajub\'{a}-MG, Brazil.}
\email{fpmicena82@unifei.edu.br}

%\author{A. Tahzibi}
%\address{Departamento de Matem\'atica,
%  ICMC-USP S\~{a}o Carlos-SP, Brazil.}
%\email{tahzibi@icmc.usp.br}

\renewcommand{\subjclassname}{\textup{2000} Mathematics Subject Classification}

\date{\today}

\setcounter{tocdepth}{2}
\title{On Non-Wandering Sets  with Non-empty Interior for Endomorphisms}
%\title{Endomorphisms with Nonempty Interior Non-Wandering Set}
\maketitle
\begin{abstract}
In this paper, we study transitivity for endomorphisms. Our results are related to a conjecture of F. Abdenur, C. Bonatti and L. Díaz \cite{ABD}, concerning the relationship between transitivity and the existence of a non-wandering set with nonempty interior. We obtain transitivity under the assumption that there exists a hyperbolic non-wandering set with non-empty interior, and in the setting of accessible partially hyperbolic endomorphisms.
\end{abstract}

%----------------------------------------------------------
\section{Introduction}
%----------------------------------------------------------

Transitivity is a fundamental topological property in Ergodic Theory and Dynamical Systems and is closely related to chaotic behavior. Informally, a transitive system cannot be decomposed into simpler invariant subsystems. More precisely, a continuous map $f:M\to M$ is \textit{transitive} if for any pair of nonempty open sets $U,V\subset M$, there exists $n\ge1$ such that
$
	f^n(U)\cap V\neq\varnothing.
$
Equivalently, $f$ admits a dense orbit. Despite its central role, several basic questions concerning transitivity remain open. A classical and well-known problem asks whether every Anosov system is transitive.

For diffeomorphisms, significant progress has been made. From the works of J. Franks and A. Manning \cite{F70, M74}, every Anosov diffeomorphism on the torus $\T^d$ is transitive. In the same direction, S. Newhouse \cite{N70} showed that codimension-one Anosov diffeomorphisms are transitive. In particular, every Anosov diffeomorphism of dimension two or three is transitive.

The study of endomorphisms exhibiting some form of hyperbolicity originates from the works of \cite{MP75} and \cite{PRZ} and has recently attracted considerable attention; see, for instance, \cite{CV23}, \cite{HH21}, and \cite{LPPR24}.  In this context, by (Theorem 8.3.5, \cite{AH94}), for any Anosov endomorphism on the $d$-torus, the non-wandering set is the whole manifold, that is,  $\Omega(f)=\T^d$. Combined with the Spectral Decomposition Theorem, this implies that $f$ is transitive. Consequently, all Anosov endomorphisms of the torus $\T^d$ are transitive.

 More recently, X. Zhang \cite{Z23} proved that special Anosov endomorphisms of codimension one are also transitive.  In the same work, it is shown that for special Anosov endomorphisms, the following properties are equivalent:
\begin{enumerate}
	\item $f$ is transitive;
	\item $\Omega(f)=M$;
	\item $f$ is topologically mixing;
	\item every unstable leaf $W^u(\bar{x})$ is dense in $M$.
\end{enumerate}
We observe that, for Anosov endomorphisms, transitivity is not equivalent to the property that every stable leaf $W^s(\bar{x})$ is dense in $M$, as is the case for Anosov diffeomorphisms. Indeed, the author also provides examples of transitive Anosov endomorphisms whose stable leaves are not dense.

Given that the condition $\Omega(f)=M$ is universally equivalent to transitivity in the Anosov endomorphisms context, understanding the general properties of the non-wandering set becomes crucial for our investigation.

Recall that, for a continuous map $f:M\to M$, a point $x\in M$ is called \textit{non-wandering} if for every open neighborhood $V$ of $x$ there exists $n\ge1$ such that
$
f^n(V)\cap V\neq\varnothing.
$
The set of all nonwandering points, denoted by $\Omega(f)$, is always $f$-subinvariant, that is, $f(\Omega(f))\subset\Omega(f)$. If $f$ is a local homeomorphism, then $\Omega(f)$ is invariant: $f(\Omega(f))=\Omega(f)$.
Indeed, suppose $\deg(f)=d$ and let $x\in\Omega(f)$ with
$
f^{-1}(x)=\{y_1,\ldots,y_d\}.
$
Choose $\varepsilon>0$ sufficiently small so that each restriction
$
f_i:B_\varepsilon(y_i)\to f(B_\varepsilon(y_i)),
\,\, i=1,\ldots,d,
$
is a homeomorphism. For each $n\in\N$, define
$
U_n=\bigcap_{i=1}^d f\big(B_{\varepsilon/n}(y_i)\big).
$
Since $x$ is nonwandering, for each $n$ there exists $m\ge1$ such that
$
f^m(U_n)\cap U_n\neq\varnothing.
$
Hence, for some $i\in\{1,\ldots,d\}$,
$
f^{m-1}(U_n)\cap B_{\varepsilon/n}(y_i)\neq\varnothing.
$
By finiteness, there exists $y\in\{y_1,\ldots,y_d\}$ and infinitely many such $m$, implying $y\in\Omega(f)$ and $f(y)=x$.

It is important to note that while the invariance $f(\Omega(f)) = \Omega(f)$ is a general property for local homeomorphisms, the complete invariance of $\Omega(f)$, which requires $f^{-1}(\Omega(f)) = \Omega(f)$, is not always guaranteed in the non-invertible setting. Since it is well established that transitivity implies the complete invariance of the non-wandering set, our investigation focuses on the converse implication: identifying the conditions under which the structure of $\Omega(f)$, such as having a nonempty interior, coupled with its complete invariance, ensures the transitivity of the system.

In this work, we study dynamical systems whose non-wandering set has nonempty interior and investigate conditions under which this implies transitivity.
In this direction, F.~Abdenur, C.~Bonatti, and L.~Díaz, proposed the following conjecture:

\begin{conjecture}[\cite{ABD}]\label{conj}
There exists a residual subset $R$ of $Diff^1(M)$ such that, for any $f \in R$, if $\Omega(f)$ has nonempty interior, then $f$ is transitive.
\end{conjecture}

In the case of endomorphisms, there are difficulties arising from the lack of invertibility are mainly due to the multiplicity of unstable directions at a given point, see, e.g., \cite{MT16}. Our results generally require the assumption of complete invariance of a set. This hypothesis is well known in the literature on non-invertible dynamics (see for instance \cite{LPPR24}, \cite{M-2010} and \cite{UW04}).

\begin{definition}
 A set $\Lambda\subset M$ is called \textit{completely $f$-invariant} if $f(\Lambda)=\Lambda=f^{-1}(\Lambda)$.	
\end{definition}

As claimed in \cite{GH}, Conjecture \ref{conj} is false in the non-invertible setting. In dimension two, one can construct examples of endomorphisms admitting a robustly $C^1$-persistent global hyperbolic attractor that is not the whole manifold; see \cite{BKRU}. It is important to note that the attractors in \cite{BKRU} capture points from outside the set itself, which implies that the attractor is not completely invariant.
Since Theorem \ref{TeoB} does not require the assumption of complete invariance, it provides an interesting counterpart to the examples constructed in \cite{BKRU}. Indeed, Theorem \ref{TeoB} implies that the phenomenon exhibited in \cite{BKRU} cannot occur when hyperbolic repellers are considered in place of hyperbolic attractors.

A question that may be of interest is whether an analogue of Conjecture \ref{conj} holds for endomorphisms whose non-wandering set is completely invariant.

There are several works concerning transitivity and non-wandering sets with nonempty interior in dimension two, such as \cite{BKRU}, \cite{GH}, and \cite{R}, among others. In contrast, our results hold in arbitrary dimension, either in the setting of accessible non-invertible partially hyperbolic endomorphisms or under the assumption that the non-wandering set contains a completely invariant hyperbolic subset with nonempty interior.

\section{Main Results}

Throughout the paper, $M$ denotes a compact, connected, boundaryless $C^{\infty}$ Riemannian manifold. Within this framework, we establish the following results.

\begin{maintheorem}\label{TeoA}
	If $f \colon M \to M$ is a $C^1$ endomorphism and $\Lambda \subset \Omega(f)$ is a completely invariant hyperbolic set with nonempty interior, then $f$ is a transitive Anosov endomorphism.
\end{maintheorem}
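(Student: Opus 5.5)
The plan is to show first that $\Lambda=M$. Once that holds, $f$ is hyperbolic on all of $M$, i.e. an Anosov endomorphism. Transitivity then follows from $\Omega(f)=M$ together with the spectral decomposition for hyperbolic endomorphisms.

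\textbf{Step 1: reduce to $\Lambda$ being open.} Let $U=\operatorname{int}(\Lambda)\neq\varnothing$. Since $\Lambda$ is compact (we take hyperbolic sets to be compact), it suffices to show $\Lambda$ is open: $M$ is connected, so a nonempty clopen set is all of $M$. Complete invariance gives $f^{-1}(\Lambda)=\Lambda$. Hence $f^{-1}(U)$ is an open subset of $\Lambda$, so $f^{-1}(U)\subset U$.

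Hyperbolicity on the open set $U$ forces $Df$ to be invertible at each point of $U$; near a point of $U$ the unstable cone is expanded and the stable direction contracted, so no kernel is possible. Thus $f|_U$ is a local diffeomorphism. I then intend to show that $V=\bigcup_{n\ge 0} f^n(U)$ is open. We have $V\subset\Lambda$ because $f(\Lambda)=\Lambda$. Each $f^n(U)$ is open: points of $f^{n-1}(U)$ lie in $\Lambda$, and if every point of $\Lambda$ that is an interior-type point has invertible derivative, then $f$ is a local diffeomorphism there. To make this rigorous I will prove directly that $\operatorname{int}(\Lambda)$ is forward invariant, $f(U)\subset U$, by the invariance of domain argument. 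This applies since $f$ is $C^1$ with invertible derivative on $U\subset\Lambda$. In fact $Df$ is invertible on all of $\Lambda$ by hyperbolicity: the derivative maps $E^s\oplus E^u$ injectively, because $Df|_{E^u}$ is expanding and we may use the natural-extension splitting.

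\textbf{Step 2: show $\partial\Lambda$ is empty (the main obstacle).} Take $x\in\Lambda\cap\partial U$. The plan is to use $\Lambda\subset\Omega(f)$ together with the local product structure of hyperbolic sets. Consider local stable manifolds $W^s_\varepsilon(x)$ and local unstable manifolds $W^u_\varepsilon(\bar x)$ for orbits $\bar x$ in the natural extension.

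Since $U$ is open, its points have whole $\varepsilon$-neighborhoods inside $\Lambda$. Iterating forward, $f^n(U)$ stays in $\Lambda$ and contains disks along unstable directions of uniform size. Iterating backward via the complete invariance $f^{-n}(U)\subset\Lambda$, we get sets containing stable disks of uniform size. By nonwandering of points near $x$, some iterate of a small neighborhood of $x$ returns close to $x$. Through a point of $U$ near $x$, a large unstable disk from $f^n(U)$ and a large stable disk from $f^{-m}(U)$ pass close to $x$. Their local product brackets then fill a whole neighborhood of $x$, and these bracket points lie in $\Lambda$ by local maximality-type closure. The needed closure is: a point whose forward orbit shadows $\Lambda$ and all of whose preimages remain near $\Lambda$ belongs to $\Lambda$. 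Complete invariance gives the preimage part; openness of $U$ gives the forward part.

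Hence $x\in\operatorname{int}\Lambda$, so $\Lambda$ is open, and therefore $\Lambda=M$. I expect this step to be the real difficulty. The fallback is to first prove that $\Lambda$ is locally maximal, by expressing $\Lambda=\bigcap_{n\in\Z} f^{n}(\overline{W})$ for a neighborhood $W$ using both directions of invariance, and then to invoke the shadowing lemma.

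\textbf{Step 3: conclude.} With $\Lambda=M$, $f$ is a hyperbolic endomorphism of $M$, hence Anosov, and $\Omega(f)\supset\Lambda=M$. By the spectral decomposition cited in the introduction (as in the torus case from \cite{AH94}), $\Omega(f)$ splits into finitely many basic pieces that are open and closed in $\Omega(f)=M$. Connectedness of $M$ leaves exactly one piece, which is transitive. Therefore $f$ is a transitive Anosov endomorphism.
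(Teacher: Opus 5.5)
\paragraph{The gap: the bracket step in Step~2.} Steps 1 and 3 are fine, up to one slip. Step 2 only treats $x\in\partial U$, so what it can give is that $\overline U$ is clopen, hence $\overline U=M\subset\Lambda$. It does not show that $\Lambda$ is open, and that is the connectedness argument you should run.

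The genuine gap is the sentence ``these bracket points lie in $\Lambda$ by local maximality-type closure.'' One stable disk and one unstable disk inside $\Lambda$ near $x$ do not put their brackets $W^s_\varepsilon(q)\cap W^u_\varepsilon(\bar p)$ into $\Lambda$. That needs local product structure, i.e.\ local maximality, and complete invariance does not give it. In the invertible case complete invariance is just invariance. There, a hyperbolic periodic orbit together with one transverse homoclinic orbit is a completely invariant hyperbolic set that is not locally maximal. So your ``needed closure'' property is false in general, and your fallback $\Lambda=\bigcap_n f^n(\overline W)$ merely restates what has to be proved.

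Your justification is also circular. Complete invariance controls preimages of points already known to lie in $\Lambda$, not preimages of a bracket point $w$ whose membership is exactly the question. Separately, for $x\in\partial U$ a small neighborhood of $x$ is not contained in $\Lambda$. So ``some iterate of a small neighborhood of $x$ returns close to $x$'' produces no disks inside $\Lambda$. The returns must be taken at points of $U$ and then passed to the limit.

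\paragraph{The missing idea: saturation of $\overline U$.} This makes brackets unnecessary. For $x\in U$, choose $B_\eta(x)\subset U$. Since $x\in\Omega(f)$, there are points $y_n\to x$ with $f^{i_n}(y_n)\to x$ and $i_n\to\infty$.
\begin{itemize}
\item Forward invariance of $U$ and uniform expansion give $U\supset f^{i_n}(W^u_r(\bar y_n))\supset W^u_R(\bar f^{i_n}\bar y_n)$. These disks converge in $C^1$ to $W^u_R(\bar x)$ for a limit prehistory $\bar x$ of $x$.
\item Complete invariance gives $W^s_R(y_n)\subset f^{-i_n}\big(W^s_r(f^{i_n}y_n)\big)\subset f^{-i_n}(U)=U$. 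These converge to $W^s_R(x)$.
\end{itemize}
Since $R$ is arbitrary, and one more limit passes from $U$ to $\overline U$, you get:
\begin{itemize}
\item $W^s(z)\subset\overline U$ for every $z\in\overline U$;
\item $W^u(\bar z)\subset\overline U$ for some prehistory $\bar z$ of each $z\in\overline U$.
\end{itemize}
Then $V(x)=\bigcup_{z\in W^u_r(\bar x)}W^s_r(z)$ is an open neighborhood of $x$ contained in $\overline U$. This holds simply because each of its points lies on the local stable manifold of a point of $\overline U$; no closure of $\Lambda$ under brackets is needed. Hence $\overline U$ is clopen, so $\overline U=M=\Lambda=\Omega(f)$, and your Step 3 finishes the proof.
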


As a consequence of Theorem \ref{TeoA}, we obtain a dichotomy for the boundary of the non-wandering set.

\begin{corollary}\label{cor1}
	Let $f:M \to M$ be a $C^1$ Anosov endomorphism such that $\Omega(f)$ is completely invariant, then
	\begin{itemize}
		\item[a)] $\partial \Omega(f)=\Omega(f)$ or
		\item[b)] $\partial \Omega(f)=\varnothing$ and in this case, $f$ is transitive.
	\end{itemize}
\end{corollary}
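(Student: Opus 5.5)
Since $\Omega(f)$ is closed, the plan is to split according to whether it has empty interior. The dichotomy I aim for is: either $\operatorname{int}\Omega(f)=\varnothing$, which is case a), or $\Omega(f)=M$, which gives case b). In the second case $\partial\Omega(f)=\overline{M}\setminus \operatorname{int} M=\varnothing$, since $M$ is boundaryless and is the whole space. Transitivity in case b) will then follow from Theorem \ref{TeoA}, or from the equivalence $\Omega(f)=M\Leftrightarrow$ transitive recalled in the introduction.

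\textbf{Empty interior.} Since $\Omega(f)$ is closed, $\partial\Omega(f)=\Omega(f)\setminus\operatorname{int}\Omega(f)$. So if $\operatorname{int}\Omega(f)=\varnothing$, then $\partial\Omega(f)=\Omega(f)$ immediately, and we are in case a).

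\textbf{Nonempty interior.} Suppose $\operatorname{int}\Omega(f)\neq\varnothing$. Take $\Lambda=\Omega(f)$ itself. Because $f$ is an Anosov endomorphism, every compact invariant subset of $M$ is hyperbolic; in particular $\Omega(f)$ is hyperbolic. It is contained in $\Omega(f)$ trivially, it is completely invariant by hypothesis, and it has nonempty interior. Theorem \ref{TeoA} therefore applies and gives that $f$ is transitive.

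Transitivity of a continuous map on a compact manifold with no isolated points gives a dense orbit, hence a dense set of points with dense forward orbit. From this, $\Omega(f)=M$: every point is non-wandering, since for open $V$ we have $f^n(V)\cap V\neq\varnothing$ for some $n\ge1$ by transitivity applied to $U=V$. Consequently $\partial\Omega(f)=\partial M=\varnothing$, which is case b), and $f$ is transitive.

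\textbf{Main obstacle.} The one point needing care is justifying that $\Omega(f)$ is a hyperbolic set in the sense required by Theorem \ref{TeoA}. For endomorphisms, hyperbolicity is formulated on the inverse-limit space, with unstable directions depending on the chosen past orbit. I would check that the restriction of the global Anosov splitting, taken over all orbits $\bar x$ with $x_i\in\Omega(f)$, satisfies the definition. This works because complete invariance guarantees that every full past orbit of a point of $\Omega(f)$ stays in $\Omega(f)$. Beyond this verification, the corollary is a direct consequence of Theorem \ref{TeoA}.
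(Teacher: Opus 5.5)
Your proposal is correct and follows the paper's proof: you split on whether $\operatorname{Int}\Omega(f)$ is empty, getting $\partial\Omega(f)=\Omega(f)$ in the empty case, and in the nonempty case you apply Theorem A with $\Lambda=\Omega(f)$ to get $\Omega(f)=M$, hence $\partial\Omega(f)=\varnothing$, together with transitivity. Your two extra steps only make explicit what the paper leaves implicit: checking that $\Omega(f)$ is hyperbolic by restricting the global Anosov splitting, and deducing $\Omega(f)=M$ from transitivity with $U=V$.
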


In particular, in the invertible case, if an Anosov diffeomorphism is not transitive, then $\Omega(f)=\partial \Omega(f)$.
We also obtain the following immediate corollary for Axiom A endomorphisms under hypotheses on completely	 invariant sets.

\begin{corollary}
	Let $f:M \to M$ be an $C^1$ Axiom A endomorphism. If
	\begin{itemize}
		\item[(a)] $f$ has a completely invariant basic set with nonempty interior, or
		\item[(b)] $\Omega(f)$ is completely invariant with nonempty interior,
	\end{itemize}
	then $f$ is transitive.
\end{corollary}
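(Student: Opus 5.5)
This corollary follows from Theorem \ref{TeoA} together with the standard theory of Axiom A endomorphisms, namely the Spectral Decomposition Theorem. In the Axiom A setting, $\Omega(f)$ is hyperbolic and decomposes as a finite disjoint union $\Omega(f)=\Lambda_1\cup\cdots\cup\Lambda_k$ of basic sets. Each basic set is compact, invariant, and transitive, and $f$ permutes them. The plan is to produce, in each case, a completely invariant hyperbolic set $\Lambda\subset\Omega(f)$ with nonempty interior, and then invoke Theorem \ref{TeoA}.

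\textbf{Case (a).} Let $\Lambda_i$ be the completely invariant basic set with nonempty interior. Since $\Lambda_i\subset\Omega(f)$ and $\Omega(f)$ is hyperbolic (Axiom A), $\Lambda_i$ is a compact hyperbolic set. By hypothesis it satisfies $f(\Lambda_i)=\Lambda_i=f^{-1}(\Lambda_i)$ and has nonempty interior. Theorem \ref{TeoA} then applies directly: $f$ is a transitive Anosov endomorphism, and in particular transitive.

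\textbf{Case (b).} Take $\Lambda=\Omega(f)$. Axiom A gives hyperbolicity of $\Omega(f)$. Complete invariance and nonempty interior are hypotheses, and trivially $\Lambda\subset\Omega(f)$. Theorem \ref{TeoA} again yields that $f$ is transitive.

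The only point requiring care is the definition of Axiom A for endomorphisms. One must check that the hyperbolicity of $\Omega(f)$ is hyperbolicity in the same sense as the ``hyperbolic set'' of Theorem \ref{TeoA}, i.e.\ on the inverse limit space, with unstable directions depending on the chosen past orbit. Hyperbolicity passes to closed invariant subsets such as basic sets, so this is a matter of matching definitions rather than a genuine obstacle. In case (a) one does not even need the spectral decomposition beyond knowing that a basic set is a closed invariant subset of $\Omega(f)$.
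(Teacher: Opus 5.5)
Your proposal is correct and matches the paper, which presents this corollary as an immediate consequence of Theorem~\ref{TeoA}: in case (a) the basic set, and in case (b) $\Omega(f)$ itself, is a completely invariant hyperbolic subset of $\Omega(f)$ with nonempty interior, so Theorem~\ref{TeoA} applies directly. One small inaccuracy, which your argument never uses: in the paper's spectral decomposition each basic set satisfies $f(\Omega_i)=\Omega_i$, so $f$ cyclically permutes only the pieces $\Omega_{i,j}$ inside a basic set, not the basic sets themselves.
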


For an Axiom A endomorphism, we say that a basic set $B$ is a \textit{repeller} if $W^s(B)=B$.

The next result gives an alternative condition that does not require the assumption of complete invariance. In particular, the existence of a repeller with nonempty interior ensures global hyperbolicity and transitivity.

\begin{maintheorem}\label{TeoB}
	Let $f:M \to M$ be a $C^1$ Axiom A endomorphism with a repeller with nonempty interior. Then $f$ is a transitive Anosov endomorphism.
\end{maintheorem}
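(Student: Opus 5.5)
The plan is to reduce Theorem \ref{TeoB} to Theorem \ref{TeoA}. Let $B$ be the repeller with nonempty interior. A basic set of an Axiom A endomorphism is hyperbolic, compact, contained in $\Omega(f)$, and satisfies $f(B)=B$. So the only hypothesis of Theorem \ref{TeoA} left to check is complete invariance, namely $f^{-1}(B)\subset B$. Once that is done, Theorem \ref{TeoA} applied to $\Lambda=B$ shows that $f$ is a transitive Anosov endomorphism.

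To prove $f^{-1}(B)\subset B$, I would use the repeller condition $W^s(B)=B$. Here $W^s(B)$ is the set of points $x$ with $d(f^n(x),B)\to 0$, or equivalently, in the Axiom A setting, the union of local stable manifolds of points of $B$ together with their preimages. Take $y\in f^{-1}(B)$, so that $f(y)\in B$. Then $f^{n}(y)=f^{n-1}(f(y))\in B$ for every $n\ge1$, because $B$ is forward invariant. Hence $d(f^n(y),B)=0$ for all $n\ge 1$, so $y\in W^s(B)=B$. This gives $f^{-1}(B)=B$.

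The step I expect to be delicate is making sure this definition of $W^s(B)$ is the one in force. If the paper instead defines $W^s(B)=\bigcup_{x\in B}W^s(x)$, with $W^s(x)=\{z: d(f^n z,f^n x)\to0\}$, the argument still goes through: $y$ and any $x\in B$ with $x\in f^{-1}(f(y))\cap B$ would be needed. To avoid that issue, I would take $x=f(y)$ itself and note that the definitions coincide for Axiom A basic sets via the shadowing/local product structure. Concretely, if $d(f^n y,B)\to0$, then for large $n$ the point $f^n y$ lies in $W^s_\varepsilon(B)$, which is the union of $W^s_\varepsilon(z)$ over $z\in B$. By invariance of the stable sets, $y$ lies in the stable set of some point of $B$. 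I would state this equivalence as a standard fact, from the spectral decomposition/stable manifold theory for Axiom A endomorphisms (Przytycki), and invoke it. Complete invariance then follows and Theorem \ref{TeoA} concludes the proof.
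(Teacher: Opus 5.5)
Your proof is correct, but it takes a different route from the paper. The paper does not reduce to Theorem \ref{TeoA}. It reruns only the unstable-leaf part of that proof, which uses nothing beyond $f(U)\subset U$, to get for each $x\in\overline{U}$ a prehistory $\bar x$ with $W^u(\bar x)\subset\overline{U}\subset B$. It then replaces the inverse-branch argument for stable leaves with the hypothesis $W^s(B)=B$ used directly. The stable manifolds of the points of $W^u(\bar x)$ lie in $B$, so the local product neighbourhood of $x$ lies in $B$. Hence $\overline{U}=U$ is open and closed, so $B=M$.

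You instead observe that $W^s(B)=B$ already forces $f^{-1}(B)\subset B$, so $B$ is completely invariant and Theorem \ref{TeoA} applies verbatim. Your route is shorter and makes explicit something the paper's framing obscures. Repellers of Axiom A endomorphisms are automatically completely invariant, so Theorem \ref{TeoB} is really a special case of Theorem \ref{TeoA}. The contrast with the non-completely-invariant attractors discussed in the introduction is exactly a failure of complete invariance. The paper's route is self-contained and avoids pulling back stable discs by inverse branches, but it uses no ingredient you lack.

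One small correction to your last paragraph: taking $x=f(y)$ does not give $y\in W^s(x)$, and the appeal to shadowing or to the equivalence of the two definitions is unnecessary. With the pointwise definition $W^s(B)=\bigcup_{x\in B}W^s(x)$, use $f(B)=B$ to choose $x\in B$ with $f(x)=f(y)$. Then $f^n(x)=f^n(y)$ for all $n\ge 1$, so $y\in W^s(x)\subset W^s(B)=B$.
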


We observe that every Axiom A endomorphism has a repeller. Indeed, since $f$ is an Axiom A endomorphism, by Theorem 3.2.5 of \cite{AH94}, the induced map $\bar{f}$ on the inverse limit space $M^f$ (see definitions in Section \ref{sec-preliminares}) has a repeller $\overline{B}$. Therefore, the set $B=\pi(\overline{B})$ is a repeller for $f$.

\begin{maintheorem}\label{TeoC}
	Let $f:M \to M$ be a $C^1$ accessible, $u$-special partial hyperbolic endomorphism. If  $\Omega(f)$ is completely invariant with  nonempty interior,
	then $f$ is transitive.
\end{maintheorem}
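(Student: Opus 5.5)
The plan is to show that the interior $U=\operatorname{int}\Omega(f)$ is dense in $M$, and then to use accessibility to spread transitivity from $U$ to all of $M$.

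\textbf{Step 1: invariance of the interior.} First I will show that $U$ is a nonempty open set with $f(U)\subset U$ and $f^{-1}(U)\subset U$. Since $f$ is a local diffeomorphism, $f$ is an open map, so $f(U)$ is open. By the invariance of $\Omega(f)$ we have $f(U)\subset\Omega(f)$, and an open subset of $\Omega(f)$ lies in its interior, so $f(U)\subset U$. For the other inclusion, $f^{-1}(U)$ is open and is contained in $f^{-1}(\Omega(f))=\Omega(f)$ by complete invariance, so $f^{-1}(U)\subset U$. Hence $U$ is completely invariant and open. The same argument applies to $V=M\setminus\overline{U}$.

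\textbf{Step 2: saturation by strong leaves.} Next I will show that $U$ is saturated by stable leaves and by the unstable leaves of the $u$-special structure. For stable leaves, take $x\in U$ and $y\in W^s(x)$. Forward iterates $f^n(x)$ and $f^n(y)$ converge. The aim is to show that $y\in U$, and I would use non-wandering points together with the local product structure near $\Omega$ to do this. Plan: I expect the cleanest route to be a Hopf/Brin-type argument. Consider $\overline{U}$. Every point of $U$ is non-wandering, so near $x$ there are returning orbits. Because $U$ is completely invariant, a small $s$-disc through $x$ inside $U$ has backward images (the preimages of $U$) still in $U$. Backward iteration expands $s$-discs, so by pulling back a small $s$-disc contained in $U$ we cover arbitrarily large $s$-discs inside $U$. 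Using the $u$-special hypothesis, unstable leaves depend only on the point, so forward images of small $u$-discs in $U$ lie in $U$ and expand, which covers the full $W^u(x)$. The stable case needs the preimage argument along a chosen branch: pick the branch of $f^{-n}$ landing near $f^{-n}$-preimages of $y$ with contracted distance. This is where complete invariance is essential.

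\textbf{Step 3: density of $U$ and transitivity.} Given the saturation in Step 2, $U$ is a union of accessibility classes. Accessibility forces $U=M$, so $\Omega(f)=M$. Transitivity then follows from the same mechanism as in Theorem~\ref{TeoA}. Given open sets $A$ and $B$, I will use Brin's argument. The set $\bigcup_n f^n(A)$ is open, and I expect it to be $u$-saturated up to closure. Since points are non-wandering, its closure is also $s$-saturated (by the complete invariance argument). Accessibility then gives density, so $f^n(A)\cap B\neq\varnothing$.

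\textbf{Main obstacle.} I expect the main obstacle to be the $s$-saturation in the non-invertible setting, where backward branches are not unique. The first thing I would try is to exploit $f^{-1}(\Omega)=\Omega$ so that every branch stays inside the relevant open set, and then to argue with a fixed branch whose stable discs grow.
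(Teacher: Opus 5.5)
Your architecture is the same as the paper's: complete invariance of $U=\operatorname{Int}\Omega(f)$ (Lemma \ref{Lem00}), saturation by stable and unstable leaves, accessibility to get $\Omega(f)=M$, and then Brin's argument (the paper's Theorem \ref{accBrin}). However, Step 2 as written never reaches the leaves through $x$. Pushing forward small $u$-discs gives large $u$-discs centred at forward iterates. Pulling back a small $s$-disc at $x$ through $f^{-n}(U)=U$ gives large $s$-discs centred at preimages $x_{-n}$. Nothing in your argument brings these back to $x$, and your worry about choosing branches is a symptom of this. You mention returning orbits but never use them, and they are exactly what is needed. Work inside a fixed ball $B_\eta(x)\subset U$ and choose $y_n\to x$ and $i_n\to\infty$ with $f^{i_n}(y_n)\to x$. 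Then $f^{i_n}(W^u_r(y_n))\supset W^u_R(f^{i_n}(y_n))$ lies in $U$. Also, any $z\in W^s_R(y_n)$ has $f^{i_n}(z)\in W^s_r(f^{i_n}(y_n))\subset B_\eta(x)$ for large $n$, so $z\in f^{-i_n}(U)=U$. That is where complete invariance enters, and no branch has to be selected. Letting $n\to\infty$ and using continuity of local leaves ($u$-specialness makes the limiting unstable disc $W^u_R(x)$) gives $W^u_R(x)\cup W^s_R(x)\subset\overline{U}$, first for $x\in U$ and then for $x\in\overline{U}$ by one more limit. So you obtain saturation of $\overline{U}$, not of $U$ as you claim. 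That is still enough: accessibility gives $\overline{U}=M$, and $\Omega(f)\supset\overline{U}$ because $\Omega(f)$ is closed.

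In Step 3, ``the same mechanism as in Theorem \ref{TeoA}'' is not available. There, transitivity came from $f$ being Anosov with $\Omega(f)=M$ via spectral decomposition, which you do not have for a partially hyperbolic map. Your Brin sketch is the right replacement. However, the $s$-saturation of $\overline{\bigcup_{n}f^n(A)}$ cannot come from a complete-invariance argument, since that set is only forward invariant. It comes from recurrence plus forward contraction. Once $\Omega(f)=M$, recurrent points are residual (Lemma \ref{Lema-reconr-residual}). Given $z$ in the closure, $z'\in W^s(z)$ and an open $V\ni z'$, pick a recurrent $a$ in the interior of $V\cap\bigcup_{w\in f^k(A)\cap B_\delta(z)}W^s_R(w)$ and write $a\in W^s_R(w)$. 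Then $f^{n_j}(w)$ follows $f^{n_j}(a)$, which returns near $a$, so $f^{k+n_j}(A)\cap V\neq\varnothing$ (Lemma \ref{accs}). The $u$-direction likewise uses a recurrent $\theta\in f^k(A)$ near $z$: $f^{n_j}(W^u_\rho(\theta))\supset W^u_R(f^{n_j}(\theta))$ with $f^{n_j}(\theta)$ near $z$, so it meets $V$. In the $u$-special case this is direct and simpler than the universal-cover construction of Lemma \ref{accu}. With these two repairs your outline becomes the paper's proof.
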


From \cite{J17}, there is an open and dense subset of partially hyperbolic endomorphism with $dim (E^c)=1$ which are accessible. To prove Theorem \ref{TeoC}, it is necessary to use a non-invertible version of Brin's Theorem, see \cite{Br}.

\begin{maintheorem}\label{accBrin}
	Let $f:M \to M$ be an accessible, partially hyperbolic endomorphism. If  $\Omega(f)=M$, then $f$ is transitive.	
\end{maintheorem}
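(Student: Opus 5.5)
We plan to adapt Brin's argument to the non-invertible setting by passing to the inverse limit $M^f$. There the lifted map $\bar f$ is a homeomorphism, and we can form unstable sets $W^u(\bar x)$ along each prehistory, together with stable leaves $W^s(x)$ in $M$, which do not depend on the history. Accessibility means that any two points of $M$ can be joined by a $su$-path. Each unstable segment of such a path lies in some local unstable leaf $W^u(\bar y)$ associated to a choice of past orbit.

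To prove transitivity we take nonempty open sets $U,V\subset M$ and aim to find $n\ge 1$ with $f^n(U)\cap V\neq\varnothing$. We will use the standard Brin-type device. Since $\Omega(f)=M$, periodic-like recurrence is available: for a generic point $x$, the point $x$ is recurrent for $f$. Moreover, we can choose recurrence along a prehistory in $M^f$, because $\bar f$ on $M^f$ also has full non-wandering set. This follows from $\Omega(f)=M$ together with the surjectivity of $f$, by the same argument as in the introduction showing that preimages of non-wandering points can be chosen non-wandering.

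The key steps are as follows.
\begin{enumerate}
\item For $x\in U$, consider the set $A(U)=\overline{\bigcup_{n\ge 0}f^n(U)}$. We show it is saturated by unstable leaves: if $z\in f^n(U)$ with $z=f^n(u)$, then the unstable disc through $z$ determined by the history $(\dots,f(u),u,\dots)$ is eventually covered by iterates of $U$, since forward images of a small unstable disc in $U$ grow. Thus the closure of $\bigcup_n f^n(U)$ contains whole unstable leaves through its points, for the histories coming through $U$.
\item Using $\Omega(f)=M$, we show that $A(U)$ is also $s$-saturated on its interior. Given $y\in A(U)$ and $w\in W^s(y)$, a neighborhood $B$ of $w$ returns to itself. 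Forward iterates contract $W^s$, so $f^m(B)$ comes close to $f^m(y)$, which lies in the forward-invariant set $A(U)$. Combining this with the unstable saturation from step 1 and a local product structure argument, as in Brin, we get $w\in A(U)$. This is essentially Brin's lemma that the set of points reachable from $U$ has open interior which is $s$- and $u$-saturated.
\item Accessibility then forces the interior of $A(U)$, which is nonempty because $U\subset A(U)$, to be all of $M$. Hence $\bigcup_n f^n(U)$ is dense, and it meets $V$, possibly after using $V$'s own recurrence to guarantee $n\ge1$.
\end{enumerate}

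The main obstacle is step 1 combined with step 2, in the presence of multiple unstable directions. An $su$-path may use unstable segments along histories unrelated to the history passing through $U$. So we must show that every unstable leaf through a point of $A(U)$ lies in $A(U)$, not just those along one prehistory.

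We would first try to handle this via $\Omega(f)=M$ applied to $\bar f$ on $M^f$. Using the argument from the introduction, density of non-wandering points for $\bar f$ lets us approximate any prehistory of a point of $A(U)$ by prehistories of points in iterates of $U$. Since local unstable manifolds depend continuously on $\bar x\in M^f$, closedness of $A(U)$ then yields saturation by all unstable leaves. If this continuity-plus-recurrence argument fails, the fallback is to prove the statement for the lifted accessibility in $M^f$ and project via $\pi$.
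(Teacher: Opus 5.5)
Your outline has the same architecture as the paper's (Brin's): show that the orbit closure $A(U)=\overline{\bigcup_{n\ge0}f^n(U)}$ is closed under stable moves and under unstable moves along \emph{every} history, then walk along an $su$-path. However, neither saturation step is actually established, and these two steps are the whole content of the theorem.

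\textbf{The stable step.} Step 2 points the wrong way. Knowing that $f^m(B)$ approaches $f^m(y)\in A(U)$ says the orbit of $B$ comes near $A(U)$. It does not say that the orbit of $U$ enters $B$, which is what you need. Moreover, with a center bundle there is no $s$--$u$ local product structure to invoke. You also need saturation of the closed set $A(U)$ itself, not just of its interior, because the intermediate points of the path are only known to lie in $A(U)$. The working mechanism is the paper's Lemma \ref{accs}. Take $U_1=f^k(U)\cap B_\delta(y)\neq\varnothing$ and a neighborhood $B$ of $w\in W^s(y)$. Pick a recurrent point $a$ in the open set $\operatorname{Int}(W^s_R(U_1)\cap B)$ (Lemma \ref{Lema-reconr-residual}), so $a\in W^s_R(z')$ with $z'\in U_1$. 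Then $d(f^{n_j}(z'),f^{n_j}(a))\le C\nu^{n_j}R\to0$ and $f^{n_j}(a)\to a$, hence $f^{k+n_j}(U)\cap B\neq\varnothing$.

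\textbf{The unstable step.} This is the more serious gap: you flag it correctly but do not resolve it.
\begin{itemize}
\item The argument for Step 1 does not prove what it claims. For $z=f^n(u)$, the set $f^n(U)$ contains only an unstable disc of radius about $\mu^n r$ at $z$, not the whole leaf.
\item For limit points of $A(U)$, the growing-disc argument (as in the proof of Theorem \ref{TeoA}) gives the leaf of \emph{some} limiting history, not of a prescribed one.
\item Your remedy, $\Omega(\bar f)=M^f$ plus continuity of $\bar x\mapsto W^u_R(\bar x)$, does not close this. Approximating $\bar z$ by histories $\bar w$ with $\pi(\bar w)\in f^n(U)$ is easy (lift nearby points along the inverse branches of $\bar z$) and does not help.
\item What you need is $W^u_R(\bar w)\subset f^{n}(U)$. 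That requires $\bar w$ to pass through an iterate of $U$ at a time \emph{deeper} than the window on which it shadows $\bar z$, and nothing in your argument produces such histories.
\end{itemize}

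\textbf{The missing idea.} This is the core of the paper's Lemma \ref{accu}: place the recurrent point in the past of the prescribed history.
\begin{enumerate}
\item Fix $N$ large and set $U_1=f^k(U)\cap B_\delta(z)$.
\item Let $V_N$ be the image of $U_1$ under the inverse branch of $f^N$ along $\bar z$; this is an open set near $z_{-N}$.
\item Take a recurrent point $\vartheta\in V_N$ with $f^{n_j}(\vartheta)\to\vartheta$, and set $\theta=f^N(\vartheta)\in U_1$.
\item The point $f^{n_j}(\theta)$ then has a history that sits at $\theta\in U_1$ at time $-n_j$ and follows $z_{-N},\dots,z_0$ closely in coordinates $-N,\dots,0$.
\item Its $R$-unstable disc is therefore $f^{n_j}$ of a disc of radius $O(\mu^{-n_j})$ inside $U_1$. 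By continuous dependence of unstable discs on the history in $M^f$, it is $C^1$-close to $W^u_R(\bar z)$.
\item So it meets any neighborhood of the next point of the path.
\end{enumerate}
Without this construction, or an equivalent one, your proof does not handle unstable segments of the accessibility path taken along arbitrary histories.
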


The next theorem is similar to the Theorem \ref{TeoC}, but we do not require it to be $u$-special, instead we ask for a slightly stronger condition in the completely invariant set.

\begin{maintheorem}\label{TeoD}
	Let $f:M \to M$ be a $C^1$ accessible, partial hyperbolic endomorphism and  $\Lambda\subset M$ a completely invariant compact set  such that $f\vert_{\Lambda}:\Lambda\to\Lambda$ is transitive. If $\operatorname{Int}(\Lambda)\neq\varnothing$, then $f$ is transitive.
\end{maintheorem}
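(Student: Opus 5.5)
Theorem \ref{TeoD} states: $f$ is an accessible partially hyperbolic endomorphism, and $\Lambda$ is a completely invariant compact set with $f|_\Lambda$ transitive and $\operatorname{Int}(\Lambda)\neq\varnothing$; then $f$ is transitive. The plan is to show first that $\Lambda=M$. Once that is done, $f|_\Lambda=f$ is transitive by hypothesis and we are finished. Note that Theorem \ref{accBrin} is not even needed at that point, although $\Lambda=M$ would also give $\Omega(f)=M$.

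To prove $\Lambda=M$, use connectedness of $M$: $\Lambda$ is closed, so it suffices to show that $\Lambda$ is open, or at least that its interior is nonempty, closed and invariant under the accessibility relation. Set $U=\operatorname{Int}(\Lambda)\neq\varnothing$.

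\textbf{Step 1: saturation.} Show that $\Lambda$ is saturated by the stable leaves and by the unstable leaves (every unstable leaf $W^u(\bar x)$ through each orbit $\bar x$ of the inverse limit $M^f$).

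For stable leaves, take $x\in\Lambda$ and $y\in W^s(x)$. Complete invariance gives $f^{-1}(\Lambda)=\Lambda$, so it suffices to show $f^n(y)\in\Lambda$ for some $n$. Since $d(f^n y,f^n x)\to0$ and $f^n x\in\Lambda$, we would need $f^n x$ to land in $U$ with a uniform margin. Transitivity of $f|_\Lambda$ gives a dense orbit in $\Lambda$ that visits $U$ infinitely often, but not every $x$ has this property. So I would first prove saturation for the residual set of $x$ with dense forward orbit in $\Lambda$: such an orbit enters a fixed ball $B_r(p)\subset U$ infinitely often, and then $f^n y\in B_{2r}(p)$ for large $n$ once we choose $B_{2r}(p)\subset U$. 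Hence $f^n y \in \Lambda$, and so $y\in\Lambda$. For general $x$, pass to the limit using continuity of local stable manifolds and closedness of $\Lambda$, then extend from local to global leaves by iterating.

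For unstable leaves, argue symmetrically with backward orbits $\bar x$ in $\Lambda$. These exist and stay in $\Lambda$ because $f(\Lambda)=\Lambda=f^{-1}(\Lambda)$. One also needs a backward-dense orbit in $\Lambda$, i.e.\ transitivity of $\bar f$ on $\Lambda^f$, which follows from transitivity of the surjective map $f|_\Lambda$. Then $y\in W^u(\bar x)$ has a preimage orbit $\bar y$ with $d(y_{-n},x_{-n})\to0$. Whenever $x_{-n}$ lies deep in $U$, we get $y_{-n}\in\Lambda$, hence $y=f^n(y_{-n})\in\Lambda$.

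\textbf{Step 2: accessibility.} Once $\Lambda$ is $s$- and $u$-saturated, accessibility says any two points of $M$ are joined by an $su$-path made of stable and unstable leaf segments. Following such a path starting from a point of $\Lambda$, each leg stays in $\Lambda$ by Step 1, so every point of $M$ lies in $\Lambda$. Thus $\Lambda=M$.

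\textbf{Main obstacle.} The hardest part is Step 1: getting saturation for \emph{every} point and every unstable leaf, not just for generic points. In the endomorphism setting unstable leaves depend on the chosen past, and the leaves vary only semicontinuously. An alternative I would try if the limiting argument fails is to work with $U$ directly. Show that $\operatorname{Int}(\Lambda)$ is $su$-saturated, i.e.\ if $x\in U$ then a small $su$-neighborhood built from local leaves through nearby points stays in $\Lambda$, and that $\partial U$ would contradict accessibility. This uses complete invariance to pull back and push forward the open set $U$: $f^{-n}(U)\subset\Lambda$ and $f^n(U)\subset\Lambda$, with expansion along $E^u$ making $f^n(U)$ contain large unstable disks, and contraction along $E^s$ under $f^{-n}$-branches making the preimages contain large stable disks.
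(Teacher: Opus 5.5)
Your proof is correct and has the same skeleton as the paper's. Leaf saturation plus accessibility gives $\Lambda=M$, and transitivity is then just the hypothesis on $f|_\Lambda$, exactly as the paper concludes. The difference is that your saturation step runs in the opposite direction.

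The paper works with $U=\operatorname{Int}(\Lambda)$, which is completely invariant by Lemma \ref{Lem00}. It fixes $\bar y\in U^f$ with dense forward and backward $\bar f$-orbits and uses growing discs. A small unstable disc about $\bar f^{n_l}(\bar y)$ lies in $U$, and its forward images stay in $U$ and expand. Along times with $\bar f^{n_k}(\bar y)\to\bar x$ these images converge in $C^1$ to $W^u_R(\bar x)$. Stable leaves are obtained by pulling back small stable discs along inverse branches, using the backward orbit of $\bar y$.

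You instead use forward-dense orbits for stable leaves and backward-dense orbits for unstable leaves. You need only the asymptotic description of the leaves, one ball $B_{2r}(p)\subset U$, and $f^{-1}(\Lambda)=\Lambda$, resp.\ $f(\Lambda)=\Lambda$. This avoids any control on the size of iterated discs or inverse branches. The cost is a separate passage from generic points to arbitrary points.

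Your closing worry about that passage, that leaves vary only semicontinuously, is unfounded. For unstable leaves, approximate $\bar x$ in $\Lambda^f$, not merely $x_0$ in $\Lambda$, by points whose backward $\bar f$-orbit is dense. These form a residual, hence dense, subset of $\Lambda^f$, because $\bar f|_{\Lambda^f}$ is a transitive homeomorphism by Theorem \ref{Teo-transitive}. Then use continuity of $\bar x\mapsto W^u_R(\bar x)$ on $M^f$ (Theorem \ref{PRZ-1} and its partially hyperbolic analogue, which the paper's proof also relies on). So your main route is complete, and the fallback you sketch, which is essentially the paper's argument, is not needed.

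A side benefit of your version is that it proves saturation explicitly for every point of $\Lambda$ and every lift in $\Lambda^f$. That is exactly what the paths in Definition \ref{accdef} require. The paper states the leaf inclusion only for $\bar x\in U^f$ and leaves the extension to $\overline U$ implicit.
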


%####################################################################

\section{Anosov and partially hyperbolic endomorphisms}\label{sec-preliminares}

We begin by recalling the notion of Anosov endomorphisms, which extend the classical concept of Anosov diffeomorphisms to the non-invertible setting, introduced in the works \cite{MP75} and \cite{PRZ}, whose characteristic is exhibit uniform hyperbolic behavior along every full orbit.

\begin{definition}[\cite{PRZ}]\label{def1} Let $f:M\rightarrow M$ be a $C^1$ local diffeomorphism. The map $f$ is an \emph{Anosov endomorphism} if there are
	constants  $C>1$ and $\lambda>1$, such
	that, for each orbit $(x_n)_{n\in{\Z}}$ of $f,$ that is, $f(x_n)=x_{n+1},$ there is a splitting
	$T_{x_i}M=E_{x_i}^s\oplus E_{x_i}^u,\,\, \forall i\in\Z$ which is $Df$-invariant and for all $n>0$ we have:
	\begin{enumerate}		
		\item $||Df^n_{x_i}(v^u)||\leq C^{-1}\lambda^n||v^u||,$ for $v^u\in E^u_{x_i}$,
		\item $||Df^n_{x_i}(v^s)||\leq C\lambda^{-n}||v^s||,$ for $v^s\in E^s_{x_i}$.
	\end{enumerate}
\end{definition}

As in the case of Anosov diffeomorphisms, this hyperbolic splitting induces local invariant manifolds.

\begin{theorem} [Theorem 2.1, \cite{PRZ}]\label{Teo-Loc-manifolds}
	Let $f$ be an Anosov endomorphism, for each orbit $\bar{x}=(x_n)_{n\in{\Z}}$  the following conditions are satisfied:
	\begin{itemize}
		\item[a)] the set $W^s_{\delta}(\bar{x})=\{y\in M; d(f^n(x),f^n(y))<{\delta}; \forall n\leq 0\}$ is a manifold (called the local stable manifold);
		\item[b)] the set $W^u_{\delta}(\bar{x})=\{y\in M;\exists \, (y_n)_{n=-\infty}^0, f(y_n)=y_{n+1}, \,\mbox{and}\, d(x_n,y_n)<{\delta}, \forall n<0 \}$    is a manifold (called the local unstable manifold);
		\item[c)] $T_{x_0}W^s_R(\bar{x})=E^s(x_0)$ and $T_{x_0}W^u_R(\bar{x})=E^u(x_0)$.
	\end{itemize}
\end{theorem}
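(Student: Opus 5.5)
The statement immediately above is the stable/unstable manifold theorem for Anosov endomorphisms, Theorem \ref{Teo-Loc-manifolds}. I would prove it by lifting to the inverse limit and applying the Hadamard--Perron graph transform along a fixed orbit. Fix an orbit $\bar{x}=(x_n)_{n\in\Z}$. Since $f$ is a $C^1$ local diffeomorphism of a compact manifold, there is a uniform radius $r>0$ such that $f$ restricted to $B_r(x_n)$ is a diffeomorphism onto its image for every $n$. Working in exponential charts, set $\tilde f_n=\exp_{x_{n+1}}^{-1}\circ f\circ\exp_{x_n}$, a $C^1$ map of a neighbourhood of $0$ in $T_{x_n}M$ into $T_{x_{n+1}}M$. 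Then $D\tilde f_n(0)=Df_{x_n}$ preserves the splitting $E^s_{x_n}\oplus E^u_{x_n}$ of Definition \ref{def1}. Uniform continuity of $Df$ on $M$ gives $\mathrm{Lip}(\tilde f_n-Df_{x_n})\le\varepsilon(\delta)$ on $\delta$-balls, with $\varepsilon(\delta)\to0$ uniformly in $n$.

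The key point is that the problem then becomes a non-autonomous sequence of uniformly hyperbolic local diffeomorphisms $(\tilde f_n)_{n\in\Z}$. The classical sequence version of the Hadamard--Perron theorem applies directly.

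For the unstable manifold, I would run the graph transform on the space of $1$-Lipschitz graphs from $E^u_{x_n}(\delta)$ to $E^s_{x_n}(\delta)$ for each $n\le0$. One pushes forward by $\tilde f_n$, which is invertible on the small ball, so no issue of non-invertibility arises along the chosen orbit. Cone invariance together with the expansion and contraction rates $\lambda^{\pm1}$ makes the transform a contraction in the $C^0$ norm. It therefore has a unique invariant family of graphs $\Gamma^u_n$, and $W^u_\delta(\bar x)=\exp_{x_0}(\Gamma^u_0)$. Next I would identify this graph with the set in b). A point $y$ lies on the graph if and only if its backward branch $y_n=(f|_{B_r(x_{n})})^{-1}(y_{n+1})$ stays $\delta$-close to $x_n$. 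This follows from the usual cone argument: vectors outside the unstable cone grow under backward iteration along the branch.

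For the stable manifold, I would use the forward iterates $x_n$ with $n\ge0$, which are determined by $x_0$. Applying the same construction to the inverse local maps $\tilde f_n^{-1}$ produces graphs over $E^s$ characterized by the condition $d(f^n(x),f^n(y))<\delta$ for all $n\ge0$. The ``$n\le0$'' in item a) is evidently a typo for $n\ge0$.

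The main obstacle is item c), tangency and regularity. I would prove that the graphs are $C^1$ by the $C^1$ section theorem. Concretely, one applies the fibre contraction on the bundle of candidate tangent planes, namely graphs of linear maps $E^u\to E^s$ of norm at most $1$. This shows that the invariant graph is $C^1$ with tangent plane at $0$ equal to the invariant plane $E^u_{x_0}$, and likewise $E^s_{x_0}$ for the stable manifold. All constants must be uniform in $n$; this follows from compactness of $M$ and the uniform constants $C,\lambda$ in Definition \ref{def1}. Uniformity is also what allows the local manifolds at radius $\delta$ (respectively $R$) to have size independent of the orbit.
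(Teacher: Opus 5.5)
The paper does not prove this statement itself but quotes it from Przytycki \cite{PRZ}. Your argument is the standard proof of that result and is correct in outline: you fix an orbit in $M^f$, use the uniform local inverse branches to reduce to a non-autonomous sequence of uniformly hyperbolic local diffeomorphisms $\tilde f_n$, and apply the sequence version of Hadamard--Perron with the $C^1$ section theorem; your reading of $n\le 0$ in a) as $n\ge 0$ is also right.

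Two points should be made explicit. First, because $C>1$ in Definition \ref{def1} (whose item (1) should read $\ge C^{-1}\lambda^n\|v^u\|$), one-step contraction and expansion need adapted (Lyapunov) norms along the orbit, or passing to an iterate $f^N$. Second, the uniform lower bound on the angle between $E^s_{x_n}$ and $E^u_{x_n}$, needed for charts of uniform size, is not part of that definition; it follows from the hyperbolicity estimates together with $\sup\|Df\|<\infty$.
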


Anosov endomorphisms exhibit several specific features that distinguish them from Anosov diffeomorphisms. As in the definition above, given $f: M \rightarrow M$ an Anosov endomorphism and $ \bar{x} = (x_n)_{n \in \mathbb{Z}}, \bar{y} = (y_n)_{n \in \mathbb{Z}} $ two different orbits for $f$ such that $x_0 = y_0,$ then $E^s_f(x_0) = E^s_f(y_0),$ however, it is possible that $E^u_f(x_0) \neq E^u_f(y_0),$ where $E^u_f(x_0)$ is the unstable direction defined by $\bar{x} $ and $E^u_f(y_0)$ is the unstable direction defined by $\bar{y}.$ By  Theorem 2.1 of \cite{PRZ}, these directions are integrable to unstable local discs $W^u(\bar{x})$ and $W^u(\bar{y})$, so a point $x$ can have more than one local unstable manifolds.

Given $f:M\to M,$ we call it  \textit{inverse limit space} the set

$$
M^f=\left\{ (x_n)_{n\in {\Z}} \in  \prod_{i\in{\Z}} M_i; \,\,\, M_i=M \,\, {\rm and} \,\, f(x_n)=x_{n+1} \right\}.
$$

Assuming that $M$ is a compact metric  and $f$ is continuous, it is easy to check that $M^f$ is also a compact metric space when endowed with the metric
$$
\bar{d}(\bar{x},\bar{y})=\sum_{i\in{\Z}}\frac{d(x_i,y_i)}{2^{|i|}},
$$
where $d(\cdot,\cdot)$ denotes the metric on $M$.

We define the shift map $\bar{f}:M^f\rightarrow M^f$ by  $\bar{f}((x_n)_{n\in {\Z}})=(x_{k})_{k\in {\Z}},$ where $k=n+1$. Consider the projection $\pi:M^f\rightarrow M,$  onto the zeroth coordinate, that is, if $\bar{x}=(x_n)_{n\in {\Z}}$ then
$\pi(\bar{x})=x_0$.  One can verify that $\pi$ is continuous.

If $f$ is transitive, $\bar{f}$ is transitive too, see Theorem 3.5.3 of \cite{AH94}.

\begin{theorem}\label{Teo-transitive}
	Let $f:M\to M$ be a continuous surjection, then $f$ is topologically transitive if and only if the shift map $\bar{f}:M^f\rightarrow M^f$ so is.
\end{theorem}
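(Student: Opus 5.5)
We prove both implications directly from the definitions, using two basic facts about the inverse limit. First, since $f$ is a surjection, $\pi:M^f\to M$ is surjective: every point of a surjective system has a full backward orbit, obtained by choosing preimages inductively. Second, $\pi\circ\bar f=f\circ\pi$. For open sets in $M^f$ we use the cylinder sets
\[
[U_{-k},\dots,U_0]=\{\bar x\in M^f:\ x_{-j}\in U_{-j},\ 0\le j\le k\},
\]
with $U_{-j}\subset M$ open. The metric $\bar d$ gives the product topology, so these cylinders, as $k$ ranges over $\N$, form a basis. Cylinders that also constrain forward coordinates $x_j$ with $j>0$ are not needed: $x_j=f^j(x_0)$, so such a constraint can be absorbed into an open condition on $x_0$.

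\emph{($\bar f$ transitive $\Rightarrow$ $f$ transitive).} Let $U,V\subset M$ be nonempty open sets. Then $\pi^{-1}(U)$ and $\pi^{-1}(V)$ are open by continuity of $\pi$, and nonempty by surjectivity of $\pi$. Transitivity of $\bar f$ gives $n\ge1$ and $\bar x\in\pi^{-1}(U)$ with $\bar f^n(\bar x)\in\pi^{-1}(V)$. Projecting, $x_0\in U$ and $f^n(x_0)=x_n\in V$, so $f^n(U)\cap V\neq\varnothing$.

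\emph{($f$ transitive $\Rightarrow$ $\bar f$ transitive).} Take nonempty basic open sets $A=[U_{-k},\dots,U_0]$ and $B=[V_{-l},\dots,V_0]$. By enlarging with $M$ in the shorter tuple, we may assume $k=l$. Define
\[
U'=\bigcap_{j=0}^{k} f^{-j}(U_{-k+j}),\qquad V'=\bigcap_{j=0}^{k} f^{-j}(V_{-k+j}).
\]
These are open by continuity of $f$. They are nonempty because, for any $\bar x\in A$, the point $x_{-k}$ lies in $U'$, and similarly for $B$.

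Transitivity of $f$ gives $m\ge1$ and $y\in U'$ with $f^m(y)\in V'$. Since $f$ is surjective, extend $y$ to a full orbit $\bar y\in M^f$ with $y_0=y$ by choosing preimages backwards. Then $\bar f^{k}(\bar y)$ has coordinates $y_{-j}=f^{k-j}(y)\in U_{-j}$ for $0\le j\le k$, so $\bar f^{k}(\bar y)\in A$. Likewise $\bar f^{k+m}(\bar y)$ has coordinates $f^{m+k-j}(y)\in V_{-j}$, so $\bar f^{k+m}(\bar y)\in B$. Hence $\bar f^{m}(A)\cap B\neq\varnothing$ with $m\ge1$, and since such cylinders form a basis, $\bar f$ is transitive.

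The key step is rewriting a backward cylinder as a single open set at its earliest coordinate, namely $U'$ at coordinate $-k$. This works because the backward coordinates of a point of $M^f$ are determined forward from $x_{-k}$, so $\bar f^k$ realigns the constraints to coordinates $-k,\dots,0$. Surjectivity is used in two places: for $\pi$ to be onto in the first implication, and to extend $y$ to a full orbit in the second.
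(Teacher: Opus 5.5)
Your proof is correct. The paper gives no argument for this statement and simply cites Theorem 3.5.3 of \cite{AH94}, so your proof is a self-contained replacement for that citation, and it follows the standard route. For $\bar f$ transitive $\Rightarrow$ $f$ transitive, you use that $\pi$ is an onto semiconjugacy. For the converse, every backward cylinder is an iterate of a single $\pi$-preimage, namely
\[
[U_{-k},\dots,U_0]=\bar f^{k}\bigl(\pi^{-1}(U')\bigr)
\]
with $U'$ as you define it. Writing it this way makes your ``key step'' explicit.

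Compared with the citation, your argument shows exactly where the hypotheses are used. Surjectivity of $f$ serves only to make $\pi$ onto and to extend $y$ to a full orbit. Compactness of $M$ enters only so that $\bar d$ is a genuine metric inducing the product topology.

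There is one notational slip. The $(-j)$-th coordinate of $\bar f^{k}(\bar y)$ is $y_{k-j}=f^{k-j}(y)$, not $y_{-j}$; the latter is one of the preimages you chose when building $\bar y$. The conclusion $\bar f^{k}(\bar y)\in A$ is unaffected.
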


One of the features of Anosov diffeomorphisms is the regular dependence of invariant manifolds on the base point. In the endomorphism setting, this property remains valid when considering the inverse limit space.

\begin{theorem}[Theorem 2.5, \cite{PRZ}]\label{PRZ-1}
	Let $f:M\rightarrow M$ be an $C^1$ Anosov endomorphism, consider $x_n\in M^f$ converging to $\bar{x}\in M^f$, then
	$$
	W^u_{r}(\bar{x}_n)\to W^u_{r}(\bar{x}) \,\,\, \mbox{and} \,\,\, W^s_{r}(\bar{x}_n)\to W^s_{r}(\bar{x})$$
	in the $C^1$ topology.
\end{theorem}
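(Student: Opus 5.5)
The plan is to deduce the statement from a parametrized version of the Hadamard--Perron (graph transform) theorem applied to sequences of maps along orbits in $M^f$. The key point is that, for a full orbit $\bar{x}=(x_n)_{n\in\Z}$, the local unstable manifold $W^u_r(\bar{x})$ depends only on the backward half $(x_n)_{n\le 0}$, and $W^s_r(\bar{x})$ only on $x_0$ and its forward iterates. Both are fixed points of uniformly contracting operators whose data depend continuously on finitely many coordinates, up to an error that decays geometrically in the number of coordinates used. Convergence $\bar{x}_k\to\bar{x}$ in the metric $\bar{d}$ is exactly convergence of every finite block of coordinates, which is what such an argument consumes.

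\textbf{Local charts.} Fix $r>0$ small and use exponential charts to define local maps
$F_{\bar{x},n}=\exp_{x_{n+1}}^{-1}\circ f\circ \exp_{x_n}$ on the ball of radius $r$ in $T_{x_n}M$. By uniform continuity of $Df$ on the compact manifold $M$, each $F_{\bar{x},n}$ is $C^1$-close to $Df_{x_n}$, with an error that is uniform in $\bar{x}$ and $n$. By the hyperbolicity in Definition~\ref{def1} and a standard cone-field argument, the splitting $E^s_{x_n}\oplus E^u_{x_n}$ depends continuously on $\bar{x}\in M^f$, and only on the backward coordinates in the case of $E^u$. Indeed, $E^u_{x_0}$ is the intersection over $N$ of $Df^N_{x_{-N}}$ applied to unstable cones. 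This nested intersection converges at a uniform geometric rate, and each finite stage depends continuously on $x_{-N},\dots,x_0$.

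\textbf{Graph transform.} Write $W^u_r(\bar{x})$ as the graph of a $C^1$ map $\phi_{\bar{x}}\colon E^u_{x_0}(r)\to E^s_{x_0}$. It is obtained as the limit of the graph transforms $\Gamma_{\bar{x},-1}\circ\cdots\circ\Gamma_{\bar{x},-N}$ applied to the zero section at $x_{-N}$. These operators contract in the $C^0$ metric on Lipschitz-$1$ graphs with a uniform rate $\theta<1$. Consequently
\[
\|\phi_{\bar{x}}-\Gamma^{(N)}_{\bar{x}}(0)\|_{C^0}\le C\theta^N
\]
uniformly in $\bar{x}$. For $\bar{x}_k\to\bar{x}$, the finite compositions $\Gamma^{(N)}_{\bar{x}_k}(0)$ converge to $\Gamma^{(N)}_{\bar{x}}(0)$, since only $x_{-N},\dots,x_0$ enter. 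Comparing graphs over different base points through the charts, an $\varepsilon/3$ argument then gives $C^0$ convergence. The stable case is identical, using forward iterates and the inverse graph transform; it can also be obtained directly, since the local stable set at $x_0$ is defined by forward orbits, which are determined by $x_0$.

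\textbf{Main obstacle.} I expect the hardest step to be the upgrade from $C^0$ to $C^1$ convergence. For this I would apply the fiber contraction theorem. The pair (graph, field of tangent planes) is acted on by a map that contracts in the fibre direction: it acts on the planes via the cone contraction of $DF_{\bar{x},n}$. The fibre maps depend continuously on the base data, again by uniform continuity of $Df$, so the invariant section, namely $D\phi_{\bar{x}}$, depends continuously on $\bar{x}$ by the same truncation-plus-geometric-tail argument. One must take care to compare derivatives in different tangent spaces, using the charts or parallel transport. One must also ensure that the radius $r$ and the cone constants are chosen uniformly over all of $M^f$, which compactness of $M^f$ provides.
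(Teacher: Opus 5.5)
The paper gives no proof of this statement. It is quoted from Przytycki \cite{PRZ} and used as a black box.

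Your sketch is the standard Hadamard--Perron argument with parameters, and it is sound in outline: the graph transform run along orbits in $M^f$, $C^0$ continuity by finite truncation plus a uniform geometric tail, and the $C^1$ upgrade by fibre contraction. Compared with the bare citation, it buys generality as well as self-containedness. It uses only two things: uniform hyperbolicity along orbits in a compact $\bar f$-invariant subset of $M^f$, and uniform continuity of $Df$. So it applies verbatim on $\Lambda^f$ for a hyperbolic set $\Lambda$, and to strong unstable discs of a partially hyperbolic endomorphism. That is exactly how the paper uses the result in Theorems \ref{TeoA}, \ref{TeoB} and \ref{TeoD}, where $f$ is not (yet) known to be Anosov. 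The cited statement, by contrast, is formulated only for Anosov endomorphisms.

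Two details should be tightened.

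First, ``only $x_{-N},\dots,x_0$ enter'' is not literally true if the initial disc at $x_{-N}$ is tangent to $E^u_{x_{-N}}(\bar x)$, since that plane depends on the whole prehistory. You can fix this in either of two ways:
\begin{itemize}
\item invoke the continuity of the splitting on $M^f$ that you already asserted, applied along $\bar f^{-N}(\bar x_k)\to\bar f^{-N}(\bar x)$; or
\item start from discs tangent to $(E^s_{x_{-N}})^{\perp}$, which depend on $x_{-N}$ alone because $E^s$ is a continuous field on $M$.
\end{itemize}

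Second, for a map that is merely $C^1$, the tail in the $C^1$ step is not geometric. Tangent-plane errors are controlled by the modulus of continuity of $Df$, so ``the same geometric-tail argument'' does not apply as stated. Argue instead as follows.
\begin{itemize}
\item Fix $j$ so that $j$ steps of cone contraction bring angles below $\varepsilon/2$.
\item Take $N\gg j$, and compare points of $\Gamma^{(N)}_{\bar x}(0)$ and of $W^u_r(\bar x)$ with the same $E^u_{x_0}$-coordinate. Their backward $j$-orbits, taken along the local inverse branches with Lipschitz constant $L$, are $L^jC\theta^N$-close.
\item Uniform continuity of $Df^j$ then supplies the other $\varepsilon/2$.
\end{itemize}
This tail is uniform in $\bar x$, which is all your $\varepsilon/3$ argument needs.

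Equivalently, run the fibre contraction on the complete space of continuous sections over $M^f$. The operator preserves this space, so the attracting section is automatically continuous in $\bar x$.
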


Let a set $\Lambda\subset M,$ we denote
$\Lambda^f=\{(x_n)_{n\in\Z}\in M^f; x_n\in\Lambda,\,\, \mbox{for\,\,all}\,\, n\in\Z\}$.

\begin{definition}
	Let $f:M\rightarrow M$ be a $C^1$ local diffeomorphism. A set $\Lambda\subset M$ is a \emph{hyperbolic set} of $f$ if $\Lambda$ is non-empty compact set  with $f(\Lambda)=\Lambda$ and  for all  \mbox{$\bar{x} \in \Lambda^f$} there is a continuous $Df$-invariant splitting
	$T_{x_i}M=E_{x_i}^s\oplus E_{x_i}^u,$ $\, \forall i\in\Z$ and	constants  $C>1$ and $\lambda>1$, such
	that
	\begin{enumerate}		
		\item $||Df^n_{x_i}(v^u)||\leq C^{-1}\lambda^n||v^u||,$ for $v^u\in E^u_{x_i}$,
		\item $||Df^n_{x_i}(v^s)||\leq C\lambda^{-n}||v^s||,$ for $v^s\in E^s_{x_i}$.
	\end{enumerate}	
\end{definition}

Next, we recall the notion of Axiom $A$ endomorphisms.

\begin{definition}[Definition 3.3, \cite{PRZ}]
	Let $f:M\rightarrow M$ be a $C^1$ local diffeomorphism. The map $f$ is an \emph{Axiom A} endomorphism if:
	\begin{itemize}
		\item $\Omega(f)$ is a hyperbolic set,
		\item $\overline{Per(f)}=\Omega(f)$.
	\end{itemize}
\end{definition}

Anosov endomorphisms automatically satisfy Axiom $A$, as stated in the following proposition.

\begin{proposition}[Proposition 3.2, \cite{PRZ}]\label{PropAA}
	Let $f:M\rightarrow M$ be  an Anosov endomorphism, then
	$\overline{Per(f)}=\Omega(f)$. In particular, every  Anosov endomorphism is an Axiom A  endomorphism.
\end{proposition}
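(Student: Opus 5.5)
The inclusion $\overline{Per(f)}\subset\Omega(f)$ is immediate: every periodic point is non-wandering, and $\Omega(f)$ is closed. For the reverse inclusion, I would prove an Anosov closing lemma through a shadowing argument on the inverse limit space $M^f$. Fix $x\in\Omega(f)$ and $\varepsilon>0$, and let $\delta>0$ be the constant from shadowing. Since $x$ is non-wandering, there are $y\in B_\delta(x)$ and $n\ge1$ with $d(f^n(y),y)<2\delta$. Then the bi-infinite periodic sequence obtained by repeating the block $y,f(y),\dots,f^{n-1}(y)$ is a $2\delta$-pseudo-orbit $(p_k)_{k\in\Z}$ with $p_{k+n}=p_k$.

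\textbf{Step 1 (shadowing).} Every bi-infinite $\delta$-pseudo-orbit of an Anosov endomorphism is $\varepsilon$-shadowed by a genuine orbit $\bar z=(z_k)_{k\in\Z}\in M^f$, meaning $d(z_k,p_k)<\varepsilon$ for all $k$. I would prove this in exponential charts, as a fixed-point problem on $\ell^\infty(\Z,\mathbb{R}^d)$. Write $z_k=\exp_{p_k}(v_k)$ and solve $v_{k+1}=F_k(v_k)$, where $F_k$ is $f$ read in the charts at $p_k,p_{k+1}$. The linear parts $A_k=Df_{p_k}$ (up to small errors) must form a hyperbolic sequence; that is, the operator $(v_k)\mapsto(v_{k+1}-A_kv_k)$ must be invertible on $\ell^\infty$ with uniformly bounded inverse. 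A contraction-mapping argument then yields a unique small solution.

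\textbf{Step 2 (uniqueness and periodicity).} Uniqueness of the shadowing orbit amounts to expansivity of $\bar f$ on $M^f$. Since the pseudo-orbit is $n$-periodic, the shifted orbit $\bar f^{\,n}(\bar z)$ also $\varepsilon$-shadows it. Uniqueness therefore gives $\bar f^{\,n}(\bar z)=\bar z$, so $z_0$ is a periodic point of $f$ with $d(z_0,x)\le d(z_0,y)+d(y,x)<\varepsilon+\delta$. Letting $\varepsilon\to0$ shows $x\in\overline{Per(f)}$. Since Anosov endomorphisms are local diffeomorphisms with $\Omega(f)$ hyperbolic (every full orbit is hyperbolic), $f$ is then Axiom A.

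\textbf{Main obstacle.} The difficulty lies in the hyperbolicity of the sequence $(A_k)$ along a pseudo-orbit. In the non-invertible setting, $E^u$ depends on the whole past of an orbit, not on the point, so there is no continuous splitting on $M$ that can simply be evaluated along $(p_k)$. I would handle this with invariant cone fields instead. The stable bundle $E^s$ depends only on the point (as noted after Theorem \ref{Teo-Loc-manifolds}), so stable cones $C^s$ can be defined continuously on $M$. Unstable cones can be obtained from the uniform expansion/contraction in Definition \ref{def1} together with the compactness and continuity given by Theorem \ref{PRZ-1}: every unstable direction at $p$, over all orbits through $p$, lies in a cone transverse to $E^s_p$ of uniform size. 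For a fixed iterate $f^N$, these cones are mapped strictly inside themselves, with uniform expansion on $C^u$ and contraction under the inverse on $C^s$. By continuity, the same holds for $Df_{p_k}$ along $\delta$-pseudo-orbits when $\delta$ is small. The standard cone criterion then gives an exponential dichotomy for $(A_k)$, and hence the bounded inverse needed in Step 1.
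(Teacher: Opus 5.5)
The paper gives no proof of this proposition. It is imported from \cite{PRZ} and used as a black box, so there is no in-paper argument to compare with. Your proof is the standard closing-lemma argument, and it is correct.

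The shadowing step is local along the pseudo-orbit, so it only needs $f$ to be a local diffeomorphism, not globally invertible. The $n$-periodicity of the pseudo-orbit, together with uniqueness of the small fixed point, then forces $\bar f^{\,n}(\bar z)=\bar z$. You also correctly identify the one genuinely non-invertible difficulty, and your remedy works. The angle between $E^s_p$ and $E^u(\bar p)$ is bounded below by some $\alpha_0>0$ over the compact space $M^f$. Now use cones defined only through $E^s_p$: a lower bound on the angle with $E^s_p$ for the unstable cone, an upper bound for the stable cone, both thresholds below $\alpha_0$. For large $N$, $Df^N_p$ maps the unstable cone strictly into itself and expands it, and $(Df^N_p)^{-1}$ does the same for the stable cone. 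This holds whichever orbit through $p$ you use to decompose a vector.

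When writing it out, tighten the following points.
\begin{enumerate}
\item Take $y\in B_{\delta/2}(x)$, so that you really have a $\delta$-pseudo-orbit and not a $2\delta$ one.
\item State the cone invariance for the blocks $A_{k+N-1}\cdots A_k$, not for the single maps $A_k$. These blocks are uniformly close to $Df^N_{p_k}$, with $E^s_{p_{k+N}}$ close to $E^s_{f^N(p_k)}$, when $\delta$ is small.
\item Justify continuity of $E^s$ on $M$: the splitting is continuous on $M^f$, and $\pi$ is a quotient map (a continuous surjection from a compact space).
\item Choose $\varepsilon$ below the radius of the ball in which the fixed point is unique.
\item For the Axiom A conclusion, also record that $f(\Omega(f))=\Omega(f)$. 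This follows, for instance, from $f(Per(f))=Per(f)$ and compactness.
\end{enumerate}
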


Similarly to the invertible case,  Axioma A  endomorphism satisfy the property of spectral decomposition of the nonwandering set, see  Theorem 3.4.4 of \cite{AH94}. More precisely:

\begin{theorem}[Spectral decomposition of $\Omega(f)$]\label{Teo-Dec-Spec}
	If $f$ is an Axiom $A$ endomorphism, then $\Omega(f)$ can be written in a unique way as a disjoint union
	$\Omega=\bigcup_{i=1}^{l}\Omega_i$, where each $\Omega_i$ is compact, satisfies $f(\Omega_i)=\Omega_i$ and $f$ is transitive on $\Omega_i$. The sets $\Omega_i$ are called the basic sets of $f$. Moreover, each $\Omega_i$ can be further decomposed into a finite disjoint union $\Omega_i=\bigcup_{1\leq j\leq n_i}\Omega_{i,j}$, where $\Omega_{i,j}$ is compact, $f(\Omega_{i,j})=\Omega_{i,j+1}$\,\, ($\Omega_{i,n_i+1}=\Omega_{i,1}$) and $f^{n_i}$ is mixing on each $\Omega_{i,j}$. 	
\end{theorem}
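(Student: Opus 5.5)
The statement is the classical Smale–Bowen spectral decomposition, transplanted to local diffeomorphisms. I would prove it directly on $\Omega=\Omega(f)$, using hyperbolicity of $\Omega$ and $\overline{Per(f)}=\Omega$, and borrow inverse-limit tools only where a choice of past is needed.

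\textbf{Preliminary tools.} First I would establish two properties of the hyperbolic set $\Omega$.
\begin{itemize}
\item \emph{Local product structure.} For each $\bar x\in\Omega^f$ there are local manifolds $W^s_\varepsilon(x_0)$ and $W^u_\varepsilon(\bar x)$, as in Theorem \ref{Teo-Loc-manifolds} (the same graph-transform proof works on a hyperbolic set). If $y\in\Omega$ and $\bar x\in\Omega^f$ are $\delta$-close, then $W^s_\varepsilon(y)$ and $W^u_\varepsilon(\bar x)$ meet in exactly one point $[y,\bar x]$.
\item \emph{Shadowing/closing lemma in $\Omega^f$.} Every $\delta$-pseudo-orbit of $\bar f$ in $\Omega^f$ is $\varepsilon$-shadowed by a true orbit. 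For periodic pseudo-orbits the shadowing orbit is periodic.
\end{itemize}
Combining the two with $\overline{Per(f)}=\Omega$, I would show that $[y,\bar x]\in\Omega$. The reason is that a point of $W^s_\varepsilon$ of one periodic orbit and $W^u_\varepsilon$ of another is approximated by periodic points, via shadowing of the concatenated orbit.

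\textbf{Definition of the pieces.} For $x,y\in\Omega$ I would set $x\sim y$ if for every $\varepsilon>0$ there are $\varepsilon$-chains of $f|_\Omega$ from $x$ to $y$ and from $y$ to $x$. This is an equivalence relation on $\Omega$, because every point of $\Omega$ is chain recurrent. Its classes are closed, pairwise disjoint and $f$-invariant. They are $f$-invariant in the sense $f(\Omega_i)=\Omega_i$: since $f$ is a local diffeomorphism and $f(\Omega)=\Omega$, each point of $\Omega_i$ has a preimage in $\Omega$, and that preimage chains to it.

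The key step is that each class is open in $\Omega$. Take $y\in\Omega$ that is $\delta$-close to a periodic point $p$, and choose a periodic history $\bar p$. Then $z=[y,\bar p]$ lies in $\Omega$.
\begin{itemize}
\item The forward orbit of $z$ converges to the orbit of $p$ and starts near $y$. This gives chains from $y$ to $p$.
\item Choose a history $\bar y$ of $y$ in $\Omega$. The symmetric point $[p,\bar y]$ has a past converging to the orbit of $p$. This gives chains from $p$ to $y$.
\end{itemize}
Since periodic points are dense, every point of $\Omega$ has a neighbourhood lying in one class. Hence the classes are open, and by compactness there are finitely many of them, $\Omega_1,\dots,\Omega_l$.

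\textbf{Transitivity and uniqueness.} Given open sets $U,V$ meeting $\Omega_i$, I would pick periodic points $p\in U$ and $q\in V$. I would join them by a $\delta$-chain inside $\Omega_i$ and lift this chain to a pseudo-orbit in $\Omega^f$; this is possible because chains only need forward closeness, and local inverses of $f$ give nearby pasts for finitely many steps. Shadowing then yields a true orbit from near $p$ to near $q$, so $f|_{\Omega_i}$ is transitive. Uniqueness holds because any decomposition into closed transitive invariant sets must consist of chain classes.

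\textbf{Mixing components.} For the decomposition $\Omega_{i,j}$, fix a periodic point $p\in\Omega_i$ of period $N$. Let $\Omega_{i,1}$ be the closure of the periodic points $q\in\Omega_i$ such that $W^u(\bar p)$ meets $W^s(q)$ and $W^u(\bar q)$ meets $W^s(p)$, both within $\Omega$. I would then show the following.
\begin{itemize}
\item The sets $f^k(\Omega_{i,1})$ are open and closed in $\Omega_i$ by the same bracket argument, and they cycle with some period $n_i$ dividing $N$.
\item $f^{n_i}$ is mixing on each piece, by the standard $\lambda$-lemma argument applied along $W^u(\bar p)$.
\end{itemize}

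\textbf{Main obstacle.} I expect the hardest step to be the non-invertible bookkeeping. Unstable manifolds depend on the chosen past, so every bracket and every shadowing statement must be made in $\Omega^f$ and then projected by $\pi$. In particular I must check that $[y,\bar x]\in\Omega$ and that the open-and-closed property survives projection. My first approach would be to verify both through the periodic approximation described in the preliminary step.
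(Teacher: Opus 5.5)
The paper does not prove this statement; it imports it from Aoki--Hiraide (Theorem 3.4.4 of [AH94]), where spectral decomposition is proved by topological methods, working in the inverse limit. Your direct Smale--Bowen argument is therefore a different route. Its architecture is sound: chain classes of $f|_\Omega$, openness from local product structure, finiteness, uniqueness. Two steps, however, fail as written.

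\textbf{Transitivity step.} Shadowing a chain from $p\in U$ to $q\in V$ gives a true orbit of $f$ in $M$. Nothing you have proved puts that orbit in $\Omega$, let alone in $\Omega_i$, but transitivity of $f|_{\Omega_i}$ needs $f^m(U\cap\Omega_i)\cap V\cap\Omega_i\neq\varnothing$. You can repair this with your own periodic shadowing. Since $q\sim p$, append a chain from $q$ back to $p$; this gives a \emph{periodic} pseudo-orbit in $\Omega_i$. Its shadow is periodic, hence lies in $\overline{Per(f)}=\Omega$, and it is $\varepsilon$-close to $\Omega_i$. For $\varepsilon$ smaller than the mutual distance of the finitely many classes, it therefore lies in $\Omega_i$, passing through $U$ and, $m$ steps later, through $V$.

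\textbf{Openness step.} With your convention $[a,\bar b]\in W^s_\varepsilon(a)\cap W^u_\varepsilon(\bar b)$, the forward orbit of $[y,\bar p]$ follows $y$, not $p$, so the two bracket points have swapped roles. More seriously, ``starts near $y$'' only gives a first jump of size comparable to $d(y,p)$, which is fixed. But $y\sim p$ requires $\varepsilon$-chains for \emph{every} $\varepsilon>0$.

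To get fine chains, you must board the connecting orbit where it is asymptotic to the orbit of $y$. For $w=[p,\bar y]$: go from $y$ to $y_{-n}$ by chain recurrence, jump onto $w_{-n+1}$, and follow the orbit of $w$ until it is close to the orbit of $p$. This puts the points $w_{-n}$ of the natural history of $w$ into the chain. Chains must stay in $\Omega$, and $\Omega$ need not be backward invariant; this is exactly the phenomenon the paper stresses. So you need $\bar w\in\Omega^f$, not merely $w\in\Omega$, which is more than your plan claims to check.

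This does follow from your periodic approximation if you carry it out in $M^f$. For periodic data, the periodic shadows of the alternating $p$-block/$q$-block pseudo-orbits converge coordinatewise to the whole orbit of the bracket point, by uniqueness of shadows. The general case then follows by approximating $\bar y$ in $M^f$ by periodic orbits.

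A cleaner repair is to prove $p\sim q$ only for $\delta$-close \emph{periodic} points $p,q$. Use those periodic shadows themselves as the connecting chains: all their points are periodic, hence in $\Omega$, and in the middle of long blocks they are arbitrarily close to the orbits of $p$ and $q$. Openness then follows from the density of $Per(f)$ and the closedness of the classes.
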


We now turn to partially hyperbolic endomorphisms, which generalize uniform hyperbolicity by allowing an intermediate central direction with weaker dynamical behavior.

\begin{definition}[\cite{CM22}]\label{def 1}
	Let $f:M\rightarrow M$ be a $C^1$ local diffeomorphism. The map $f$ is a \emph{partially hyperbolic endomorphism} if there are constants \mbox{$0<\nu<\gamma_1\leq\gamma_2<\mu$} with $\nu<1,$ $\mu>1$ and $C>1$ such
	that for each $(x_n)_{n\in{\Z}} \in M^f,$ there is a splitting
	$T_{x_n}M=E_{x_n}^s\oplus E_{x_n}^c\oplus E_{x_n}^u$ satisfying:
		\begin{enumerate}		
		\item $Df(E^{\ast}_{x_i})=E^{\ast}_{x_{i+1}}, \ast \in \{s,c,u\},$  for any $i \in \mathbb{Z},$		
		\item $||Df^n_{x_i}(v^s)||\leq C\nu^n||v^s||,$
		\item  $C^{-1}\gamma^n_1||v^c||\leq||Df^n_{x_i}(v^c)||\leq C\gamma^n_2||v^c||,$
		\item $C^{-1}\mu^n||v^u||\leq||Df^n_{x_i}(v^u)||,$	
	\end{enumerate}
	for any $ v^s\in E_{x_i}^s,  v^c\in E_{x_i}^c$ and  $v^u\in E_{x_i}^u.$	
\end{definition}

As in the definition of Anosov endomorphism, partially hyperbolic endomorphisms can present a complicated structure of the center and unstable bundles. Given a point $x\in M$ can be defined infinitely many $E^c, E^u$ bundles at $x,$ see for instance Theorem A of \cite{CM22}. Moreover, $E^{cs}$ is uniquely defined for each point. For more topological properties of partially hyperbolic and Anosov endomorphisms we refer to \cite{CM22} and \cite{MT16}.

\begin{definition}
	Let $f : M \to M$ be a partially hyperbolic endomorphism. The map $f$ is \emph{$u$-special} if for all $x \in M$, the unstable direction $E^u_f(x) \subset T_xM$ is independent of the orbit $\bar{x} = (x_n)_{n\in\mathbb{Z}}$ such that $x_0 = x$. Similarly, $f$ is said to be \emph{$c$-special} or \emph{$cu$-special} if the corresponding property holds for the central and center-unstable directions, respectively.
\end{definition}

An important geometric property in the study of partially hyperbolic systems is accessibility. In general terms, this means that stable and unstable manifolds connect any two points through a finite path of segments, for an illustrative idea see Figure \ref{fig:accessibility}.

\begin{definition}\label{accdef}
	A partially hyperbolic endomorphism $f:M\rightarrow M$ is accessible if for
	any pair of points $x, y \in M$, there exists a piecewise $C^1$ path $\gamma_1\ast\gamma_1\ast\ldots\ast\gamma_n$ connecting
	$x$ and $y$, such that:
	\begin{itemize}
		\item[(1)] Each path $\gamma_j:[0,1]\to M,$ $(j=1,\ldots,n)$ satisfies:
		\begin{itemize}
			\item[$\bullet$] $\gamma_j([0,1])\subset W^s_f(z_{j-1})$ or
			\item[$\bullet$] $\gamma_j([0,1])\subset W^u_f(\bar{z})$, where $\bar{z}$ is a lift of  $z_{j-1}$ (i.e., $\pi(\tilde{z}) = z_{j-1}$).			
		\end{itemize}
		\item[(2)]  The path connects $x = z_0 , \ldots, z_n = y$ with $\gamma_j(0) = z_{j-1}$ and $\gamma_j(1) = z_j$.
	\end{itemize}
\end{definition}
\begin{figure}[h!] % [h!] tenta forçar a posição "here"
%	\centering
	\includegraphics[width=0.8\textwidth]{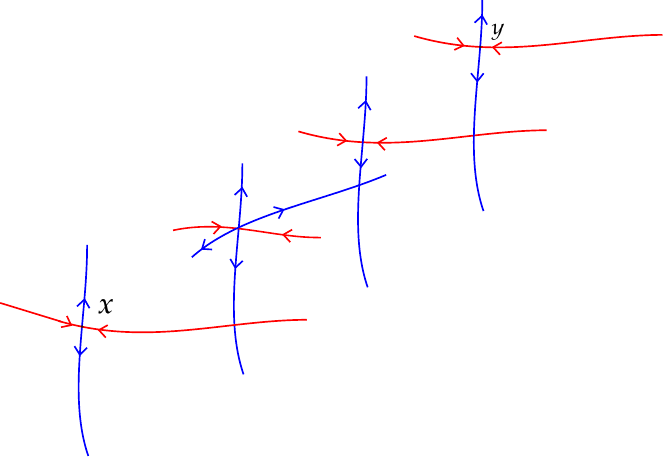} % Ajusta a largura
	\caption{Example of accessibility from $x$ to $y$.}
	\label{fig:accessibility}
\end{figure}

From \cite{J17} we know that there is an open and dense subset of partially hyperbolic endomorphism with $dim (E^c)=1$ that are accessible.

\vspace{0.3cm}

%####################################################################

\section{Proof of theorems \ref{TeoA} and \ref{TeoB}}

\begin{lemma}\label{Lem00}
	Let $f:M \to M$ be a local homeomorphism and $\Lambda$ a compact set completely invariant and denote $U=\operatorname{Int}(\Lambda)$,  then $U$ and its closure $\overline{U}$ are both completely invariant.	
\end{lemma}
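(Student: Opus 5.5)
We prove the statement in two steps: first complete invariance of $U=\operatorname{Int}(\Lambda)$, using that $f$ is continuous and open; then complete invariance of $\overline{U}$, using continuity together with compactness (for surjectivity).

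\textbf{Step 1: $f^{-1}(U)\subset U$.} Since $f$ is continuous, $f^{-1}(U)$ is open. Moreover $f^{-1}(U)\subset f^{-1}(\Lambda)=\Lambda$. An open set contained in $\Lambda$ lies in $\operatorname{Int}(\Lambda)=U$, so $f^{-1}(U)\subset U$.

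\textbf{Step 2: $f(U)\subset U$.} A local homeomorphism is an open map, so $f(U)$ is open. Also $f(U)\subset f(\Lambda)=\Lambda$, hence $f(U)\subset U$.

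\textbf{Step 3: $U\subset f(U)$ and $U\subset f^{-1}(U)$.} By Step 2, $U\subset f^{-1}(U)$, which with Step 1 gives $f^{-1}(U)=U$. For $U\subset f(U)$, take $x\in U$. Since $x\in\Lambda=f(\Lambda)$, there is $y\in\Lambda$ with $f(y)=x$. Then $y\in f^{-1}(U)=U$, so $x\in f(U)$. Hence $f(U)=U=f^{-1}(U)$.

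\textbf{Step 4: the closure.} Continuity gives $f(\overline U)\subset\overline{f(U)}=\overline U$. For the reverse inclusion, $f(\overline U)$ is compact and contains $f(U)=U$, so it is closed and contains $\overline U$. Thus $f(\overline U)=\overline U$. From this, $\overline U\subset f^{-1}(\overline U)$ holds automatically.

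The remaining inclusion, $f^{-1}(\overline U)\subset\overline U$, is the main obstacle, since preimages of closures need not be closures of preimages for arbitrary maps. Here the local homeomorphism property is decisive. Let $f(y)=x\in\overline U$, and choose a neighborhood $V$ of $y$ such that $f|_V:V\to f(V)$ is a homeomorphism onto an open set. Take $x_k\in U\cap f(V)$ with $x_k\to x$, and set $y_k=(f|_V)^{-1}(x_k)$. Then $y_k\in f^{-1}(U)=U$ and $y_k\to y$, so $y\in\overline U$. Equivalently, $f^{-1}(\overline U)$ is contained in the closure of $f^{-1}(U)$ because $f$ is open. Therefore $f^{-1}(\overline U)=\overline U$, and $\overline U$ is completely invariant.
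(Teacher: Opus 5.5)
Your proof is correct and follows essentially the same route as the paper. Openness and continuity give $f^{-1}(U)=U=f(U)$, continuity plus compactness give $f(\overline U)=\overline U$, and the local-inverse argument gives $f^{-1}(\overline U)\subset\overline U$. The only minor differences are that you get $U\subset f(U)$ from $f(\Lambda)=\Lambda$ rather than from global surjectivity of $f$, and you get $\overline U\subset f^{-1}(\overline U)$ directly from $f(\overline U)\subset\overline U$ rather than via compactness of $f^{-1}(\overline U)$.
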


\begin{proof}
Suppose $U\neq\varnothing$. Since $\Lambda$ is invariant and  $f$ is a local homeomorphism, then $f(U)\subset U$ and consequently $U \subset f^{-1}(U).$ On the other hand, since $f$ is continuous, $f^{-1}(U)$ is an open set contained in $\Lambda,$ since $\Lambda$ is completely invariant. Therefore also $f^{-1}(U) \subset U,$ as required.

The equality $U = f^{-1}(U),$ and since $f$ is surjective local homeomorphism, we obtain $f(U) = f(f^{-1}(U)) = U.$ Since $\Lambda$ is compact, we have $f(\overline{U})=\overline{f(U)}=\overline{U}$.

Again by compactness of $\Lambda$, $f^{-1}(\overline{U})$  is compact which contains $U$, and thus contains $\overline{U}.$ Hence $\overline{U}\subset f^{-1}(\overline{U})$. Finally, in order to show $f^{-1}(\overline{U})\subset \overline{U},$ take $x\in f^{-1}(\overline{U}),$ so there is $y\in \overline{U}$ with $x\in f^{-1}(\{y\})$. There are neighborhoods  $V_x$ of $x$ and $V_y$ of $y$ such that $f|_{V_x}:V_x\to V_y$ is a homeomorphism. Since $y\in \overline{U},$ there is $y_n\in V_y\cap U$ such that $y_n\to y.$  The continuity of $(f|_{V_x})^{-1}$ we have $(f|_{V_x})^{-1}(y_n)\to (f|_{V_x})^{-1}(y) = x,$ therefore $x\in \overline{U}.$
\end{proof}

\begin{proof}[Proof of Theorem A]

Denote by $U = \operatorname{int}(\Lambda)$.	
Up to replacing $U$ with one of its connected components, we can suppose that $\dim E^s_x$ is constant, $\forall x \in \overline{U},$
it makes constant the function $(\bar x,i) \mapsto \dim E^u_{\bar x_i} > 0,$ $\forall i \in \mathbb{Z},$ and
$\bar x \in M^f,$ with $\pi(\bar x)=x.$

We first deal with unstable sets. Given $x \in \overline{U},$ we
show that there exists $\bar x \in \pi^{-1}(x)$ such that
$W^u(\bar x) \subset \overline{U}.$

First consider $x \in U.$ Since $x$ is a non-wandering point,
there exist sequences $y_n \in U$ and $i_n \in \mathbb{N}$ such that
$
y_n \to x$ and
$f^{i_n}(y_n) \to x.$ Moreover, the sequence $(i_n)$ may be chosen increasing.

Consider $\eta >0$ such that $B_\eta(x) \subset U.$
After relabeling, we may assume that
\[
y_n \in B_\eta(x)
\qquad \text{and} \qquad
f^{i_n}(y_n) \in B_\eta(x),
\]
for every $n \geq 1.$
For each $n \geq 1,$ take $\bar{y}_n \in M^f$
such that $\pi(\bar{y}_n) = y_n.$

Since $\Lambda$ is hyperbolic, there exists $r>0$ such that
\[
W_r^u(\bar{y}_n) \subset B_\eta(x).
\]
By invariance of $U$ under $f,$ one obtains
\[
f^{i_n}(W_r^u(\bar{y}_n) \subset U.
\]

Choose $R>0$. Observe that, for all sufficiently large $n,$

\[
f^{i_n}(W_r^u(\bar{y}_n)) \supset
W_R^u(\bar f^{i_n}(\bar{y}_n)),
\]
by uniform expansion of $Df|_{E^u}$.

Since $M^f$ is compact, after passing to a subsequence,
we may assume that
$
\bar f^{i_n}(\bar{y}_n) \to \bar x,
$
where $\pi(\bar x)=x$.

Finally, by Theorem \ref{PRZ-1},

\[
U \supset W_R^u(\bar f^{i_n}(\bar{y}_n))
\xrightarrow{C^1}
W_R^u(\bar x).
\]

Hence
$
W_R^u(\bar x)\subset \overline U.
$
Since $R>0$ was arbitrary, we conclude that
$
W^u(\bar x)\subset \overline U.
$
Arguing as before, given $x \in \overline{U},$  one obtains
$
W^u(\bar x) \subset \overline{U},
$
for some $\bar x \in \pi^{-1}(x).$

\bigskip

We now turn to stable manifolds.
As $f$ is not invertible, arguments involving inverse branches require
more care than in the diffeomorphism setting.

Fix $x \in U$, and let
$B_\eta(x)$, $y_n$, and $i_n$ be as before.
Then there exists $r>0$ such that for every $n \geq 1,$

\[
W_r^s(y_n) \subset B_\eta(x)
\qquad \text{and} \qquad
W_r^s(f^{i_n}(y_n)) \subset B_\eta(x).
\]

Fix an arbitrary $R>0$. Taking inverse branches along the orbit of $y_n,$ the hyperbolicity of $f$
and the complete invariance of $U,$ we obtain

\[
U \supset
f^{-i_n}\Bigl(W_r^s(f^{i_{n}}(y_n))\Bigr)
\supset
W_R^s(y_n),
\qquad \forall n \geq 1.
\]

Moreover,

\[
W_R^s(y_n)
\xrightarrow{C^1}
W_R^s(x).
\]

Thus
$
W_R^s(x) \subset \overline{U}.
$

Similarly to the unstable case, given $x \in \overline{U},$
we conclude that

\[
W^s(x) \subset \overline{U},
\qquad \text{for every } x \in \overline{U}.
\]

To finish the proof, given $x \in \overline{U},$
consider $\bar x$ such that
$W^u(\bar x) \subset  \overline{U}.$
Define

\[
V(x)=\bigcup_{z \in W_r^u(\bar x)} W_r^s(z),
\]
where $r>0$ is sufficiently small.

The local product structure implies that $V(x)$ is open in $M$.
Moreover, $V(x)\subset \overline U$.
Therefore, $\overline U$ is both open and closed.
Since $M$ is connected,

\[
M = \overline{U} \subset \Lambda \subset \Omega \subset M,
\]

hence
$
\Lambda = M = \Omega.
$

Therefore, $f$ is an Anosov endomorphism with
$\Omega=M$, and hence it is transitive.
\end{proof}

\begin{proof}[Proof of Corollary \ref{cor1}]
Suppose that there exists $x \in \Omega(f)$ and an open set $V_x$ containing $x$ such that $V_x \subset \Omega(f)$. Then $\operatorname{Int}(\Omega(f)) \neq \varnothing$. By Theorem~\ref{TeoA}, it follows that $\Omega(f)=M$, and hence $\partial \Omega(f)=\varnothing$.

Otherwise, for every $x \in \Omega(f)$ and every open set $V_x$ containing $x$, we have $V_x \not\subset \Omega(f)$. This implies that every point $x \in \Omega(f)$ belongs to $\partial \Omega(f)$, and therefore $\partial \Omega(f)=\Omega(f)$.	
\end{proof}

We now present the proof of the Theorem \ref{TeoB}.

\begin{proof}[Proof of Theorem \ref{TeoB}]
Denote by $B$ the repeller of $f$ with nonempty interior, and consider $U=\operatorname{Int}(B)$. Since $B\subset \Omega(f)$, arguing as in the proof of Theorem~\ref{TeoA}, for every $x\in \overline{U}$ there exists an orbit $\bar{x}$ such that $W^u(\bar{x})$ is entirely contained in $\overline{U}$. Note that, for this the complete invariance is not needed.

We claim that $\overline{U}$ is open. Take $x\in \overline{U}$, since $W^s(B)=B$, there exists an open set
$V_r(x)\subset \displaystyle\bigcup_{z\in W^u(\bar{x})}W^s(z)$, contained in $B$, therefore $x\in \operatorname{Int}(B)=U$.

As $M$ is connected, it follows that $B=M$, and $f$ is a transitive Anosov endomorphism.
\end{proof}

\section{Transitivity and Accessibility}

In this section, we prove Theorem \ref{accBrin}, which is a variant of Brin's theorem (see Theorem 1.1 in \cite{Br}). The proof adapts Brin's argument to partially hyperbolic endomorphisms, leveraging accessibility and the fact that $\Omega(f) = M$.

The volume-preserving setting of this result was previously established in \cite{MU25}, where the volume-preserving hypothesis ensures that $\Omega(f) = M$. While their proof relies on the set of recurrent points having full volume.  A suitable modification allows us to obtain a topological version of theorem in \cite{MU25}. Although the underlying arguments are similar, we include the full proof here for completeness.

First, we recall the concept of recurrent point and present a classical result.

\begin{definition}
	Let $f: M \rightarrow M$ be a continuous map of a compact metric space $(M,d)$. A point $x \in M$ is called a \emph{recurrent point} of $f$ if for every neighborhood $V$ of $x$ there exists $n\geq 1$ such that $f^n(x)\in V.$
\end{definition}

Denote by $R(f) = \{x \in M\;| \; x  \in \omega_f(x)\},$ the set of recurrent points of $f.$

\begin{lemma}\label{Lema-reconr-residual}
Let $f:M\to M$ be a continuous map on a compact metric space $(M,d)$. If $\Omega(f)=M$, then the set of recurrent points $R(f)$ is residual in $M$.
\end{lemma}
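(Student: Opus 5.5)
The plan is the classical Baire category argument, with recurrence realized as a countable intersection of open sets. Fix a countable base $\{V_k\}_{k\ge1}$ of the topology of $M$; one exists because $M$ is a compact metric space. For each $k$ define
\[
A_k=\bigl(M\setminus \overline{V_k}\bigr)\cup\bigcup_{n\ge1}\bigl(V_k\cap f^{-n}(V_k)\bigr).
\]
Each $A_k$ is open, since $f$ is continuous, so each $f^{-n}(V_k)$ is open.

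The first step is to show that $\bigcap_k A_k\subset R(f)$. Take $x\in\bigcap_k A_k$ and a neighborhood $W$ of $x$. Choose $k$ with $x\in V_k$ and $\overline{V_k}\subset W$; this is possible in a metric space, for instance by using a base of small balls. Then $x\notin M\setminus\overline{V_k}$, so $x\in V_k\cap f^{-n}(V_k)$ for some $n\ge1$. Hence $f^n(x)\in V_k\subset W$, and $x$ is recurrent. It is convenient to take the base to consist of balls $B_{1/m}(q)$ with $q$ in a countable dense set, so that the closure condition is easily arranged.

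The second step, where the hypothesis $\Omega(f)=M$ enters, is to show that each $A_k$ is dense. Let $W$ be a nonempty open set. If $W\not\subset\overline{V_k}$, then $W$ meets $M\setminus\overline{V_k}$, which lies in $A_k$. Otherwise $W\subset\overline{V_k}$, so $W\cap V_k$ is a nonempty open set, because $V_k$ is dense in $\overline{V_k}$. Pick a point $y\in W\cap V_k$. Since $y$ is non-wandering, there is $n\ge1$ with $f^n(W\cap V_k)\cap (W\cap V_k)\neq\varnothing$. Thus some $z\in W\cap V_k$ has $f^n(z)\in V_k$, which gives $z\in W\cap A_k$.

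Finally, $M$ is a compact metric space, hence a Baire space. So $\bigcap_k A_k$ is a dense $G_\delta$, and $R(f)$ contains it, which means $R(f)$ is residual. I expect no real obstacle. The only delicate point is the bookkeeping in the first step: the term $M\setminus\overline{V_k}$ must be placed so that $A_k$ stays open and dense, while still forcing recurrence at every point $x$ that lies in $V_k$.
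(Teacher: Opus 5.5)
Your proof is correct, but it uses a different decomposition from the paper's.

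The paper works at a fixed scale rather than with a base. It sets $A_k=\{x\in M : d(f^n(x),x)<2/k \text{ for some } n\ge 1\}$. This set is open because each map $x\mapsto d(f^n(x),x)$ is continuous. It is dense because a non-wandering point $z$ has, inside any ball $B_r(z)$ with $r<1/k$, a point $x$ with $f^n(x)\in B_r(z)$, hence $d(f^n(x),x)<2r$. This gives the exact identity $R(f)=\bigcap_k A_k$, so $R(f)$ is itself a dense $G_\delta$, and no complement term like $M\setminus\overline{V_k}$ is needed.

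Your base-indexed version only yields $\bigcap_k A_k\subset R(f)$. A recurrent point lying on some $\partial V_k$ is not in your $A_k$, because $A_k\subset (M\setminus\overline{V_k})\cup V_k$. Containment is of course enough for residuality. In exchange, your argument uses the metric only to produce a countable base, so it proves the statement verbatim for any second countable Baire space.

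One small simplification: in your first step you only need $x\in V_k\subset W$, not $\overline{V_k}\subset W$. The fact $x\in V_k$ already rules out the term $M\setminus\overline{V_k}$, so the choice of a base of balls with controlled closures is unnecessary.
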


\begin{proof}
 For each $k\geq 1$ consider $\varepsilon_k = 1/k$. Consider the set $A_k=\{x\in M \mid \text{there is } n\geq 1 \text{ with } d(f^n(x),x)<2\varepsilon_k\}.$

 Take an arbitrary $z\in M.$ Since $z\in\Omega(f),$ for a fixed $k\geq 1,$ there is $x\in B_{\varepsilon_k}(z)$ and $n\geq 1$ such that
	\begin{equation}\label{eq-dist}
		d(f^n(x),x)<2\varepsilon_k.
	\end{equation}
Since $f$ is continuous, fixed $n\geq 1$, the map $x \mapsto d(f^n(x),x)$ is also continuous, which implies that the set of points which satisfy property \eqref{eq-dist} is open. Now take $0 < r < \varepsilon_k$ and consider $B_r(z),$ as before there is a point $x \in B_r(z),$ for which there exists some $n \geq 1,$ such that $f^n(x) \in B_r(z),$ particularly $x \in A_k.$

The previous assertions ensure us that $A_k$ is open and dense set for each $k\geq 1.$  Finally, observe  $$R(f) = \bigcap_{k=1}^{\infty}A_k.$$ Being a countable intersection of open and dense sets, it follows from the Baire Category Theorem that $R(f)$ is residual in $M$ and in particular it is dense in $M.$
\end{proof}

The proof of Theorem \ref{accBrin} follows from the next two lemmas together with Lemma \ref{Lema-reconr-residual}.

\begin{lemma}\label{accs}
	Let $x = z_0, \ldots, z_n = y$ be as in Definition \ref{accdef} and let $z_{i+1} \in W^s(z_i)$. If every neighborhood $U$ containing $z_i$ intersects $f^k(B_{\varepsilon}(x))$ for some integer $k \geq 0$, then $f^{\hat{k}}(B_{\varepsilon}(x)) \cap B_{\varepsilon}(z_{i+1}) \neq \emptyset$ for some integer $\hat{k} \geq 0$.
\end{lemma}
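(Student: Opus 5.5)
Since $z_{i+1}\in W^s(z_i)$, forward iterates of $z_i$ and $z_{i+1}$ approach each other exponentially: there is $N\ge 0$ with $d(f^{m}(z_i),f^{m}(z_{i+1}))$ as small as we like for all $m\ge N$. This contraction is what will carry the orbit of $B_\varepsilon(x)$ from a neighborhood of $z_i$ into $B_\varepsilon(z_{i+1})$. The problem is that the contraction only acts on forward iterates, while the target is a ball around $z_{i+1}$ itself, not around some $f^m(z_{i+1})$. So I will combine the hypothesis that every neighborhood of $z_i$ meets some $f^k(B_\varepsilon(x))$ with the recurrence supplied by Lemma \ref{Lema-reconr-residual}. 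That lemma is available because Theorem \ref{accBrin}, the setting in which this lemma is used, assumes $\Omega(f)=M$.

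The first step uses recurrence to bring the orbit back near $z_{i+1}$. By Lemma \ref{Lema-reconr-residual}, recurrent points are dense. Pick a recurrent point $w\in B_{\varepsilon/2}(z_{i+1})$, and use the local stable product structure to pick $w'$ close to $z_i$ with $w\in W^s(w')$. Since stable manifolds are uniquely defined at each point, $W^s$ depends only on the point, and the local stable leaves vary continuously. Hence we can take $w'=$ the point where $W^s_{loc}(w)$ is pushed along the stable holonomy. More simply, continuity of $W^s$ lets us assume $z_{i+1}$ itself may be replaced by $w$, and $z_i$ by a nearby $w'\in W^s(w)$.

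The second step transports the orbit along the stable leaf. Since $w$ is recurrent, there are arbitrarily large $m$ with $f^m(w)\in B_{\varepsilon/2}(w)\subset B_\varepsilon(z_{i+1})$. Fix such an $m\ge N$ large enough that $d(f^m(w'),f^m(w))$ is small. By continuity of $f^m$, choose a neighborhood $U$ of $w'$ (which is a neighborhood of $z_i$ if $w'=z_i$, or else contains a neighborhood of $z_i$ after shrinking the choices) such that $f^m(U)\subset B_\varepsilon(z_{i+1})$. The hypothesis gives $k$ with $f^k(B_\varepsilon(x))\cap U\neq\emptyset$, and then $\hat k=k+m$ satisfies $f^{\hat k}(B_\varepsilon(x))\cap B_\varepsilon(z_{i+1})\neq\emptyset$.

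The main obstacle is matching the recurrence point with the stable pair in the first step. We need $w$ recurrent near $z_{i+1}$ and a point $w'$ on $W^s(w)$ that lies in every neighborhood of $z_i$ we intend to use. Because only the point $z_i$ (not $w'$) is guaranteed to be accumulated by $f^k(B_\varepsilon(x))$, we need $w'$ inside the neighborhood $U$ of $z_i$ on which $f^m$ maps into $B_\varepsilon(z_{i+1})$. I would handle this with the uniform local stable manifolds, which depend continuously on the point. Take $w$ close enough to $z_{i+1}$ that the $f^{N}$-image of $W^s(z_{i+1})$ near $f^N(z_i)$ is matched by $W^s(w)$ near a point $w'$ within $\delta$ of $z_i$. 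Then choose $U=B_\delta(z_i)$, using uniform contraction to control $d(f^m(u),f^m(w))$ for $u\in U$ close to $W^s(w')$. If this matching proves delicate, the fallback is to apply the argument at a forward iterate and then use openness of $f$ (a local diffeomorphism).
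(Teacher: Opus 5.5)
Your proposal has a genuine gap, located exactly at the step you flagged as the main obstacle. The hypothesis only says that $f^k(B_\varepsilon(x))$ meets every neighborhood of $z_i$. So the set $U$ in your second step must contain $z_i$, and you need $f^m(U)\subset B_\varepsilon(z_{i+1})$ for a \emph{large} $m$ (large so that stable contraction does its job).

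In particular you need $f^m(z_i)\in B_\varepsilon(z_{i+1})$. Since $f^m(z_i)$ is close to $f^m(z_{i+1})$, this amounts to $z_{i+1}$ itself returning $\varepsilon$-close to itself at a large time. That is not assumed and fails in general.

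Recurrence of your point $w$ does not transfer to $z_{i+1}$. Unless $w\in W^s(z_{i+1})$, the two orbits are separated by the expansion along $E^u$. A residual set such as $R(f)$ need not meet the single leaf $W^s(z_{i+1})$ at all.

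Replacing $z_i$ by $w'$ is not legitimate either, because nothing guarantees that $f^k(B_\varepsilon(x))$ ever comes near $w'$. Your proposed repair also fails. Uniform contraction controls $d(f^m(u),f^m(w))$ only for $u$ on the stable leaf. Points merely near $W^s(w')$ are pushed apart at a rate of order $\mu^m$, so the set of admissible $u$ is a tube around $W^s(w')$ that thins as $m$ grows. That tube need not contain $z_i$, so the hypothesis says nothing about it. The ``fallback'' of passing to a forward iterate does not address this.

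The missing idea is to \emph{reverse the order of choices}. Choose the recurrent point only after the hypothesis has produced an open piece of $f^k(B_\varepsilon(x))$, and choose it on the stable leaf of a point of that piece. Concretely:
\begin{itemize}
\item Take $R>d_{W^s}(z_i,z_{i+1})$ and $V=B_\varepsilon(z_{i+1})$. By continuity of $W^s$, pick $\delta>0$ with $W^s_R(z)\cap V\neq\emptyset$ for all $z\in B_\delta(z_i)$.
\item The hypothesis gives $k$ such that $O=f^k(B_\varepsilon(x))\cap B_\delta(z_i)$ is a nonempty open set.
\item The stable saturate $W^s_R(O)\cap V$ then has nonempty interior $A$. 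Lemma \ref{Lema-reconr-residual} (available since $\Omega(f)=M$, as you correctly note) gives a recurrent $a\in A$ with $B_\alpha(a)\subset A$.
\item Now $a\in W^s_R(z)$ for some $z=f^k(b)\in O$ with $b\in B_\varepsilon(x)$. Take return times $n_j$ with $f^{n_j}(a)\in B_{\alpha/2}(a)$ and $j$ large enough that $d(f^{n_j}(z),f^{n_j}(a))<\alpha/2$. Then $f^{k+n_j}(b)\in B_\alpha(a)\subset V$.
\end{itemize}
This is the paper's argument. The point is that recurrence is used at a point whose forward orbit is shadowed, via the stable leaf, by the orbit of a point already lying in $f^k(B_\varepsilon(x))$. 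Your choice of $w$ near $z_{i+1}$ and $w'$ near $z_i$, made independently of $f^k(B_\varepsilon(x))$, cannot ensure this.
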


The proof mirrors Lemma 2 of \cite{Br}, we write for the completeness of the text. The Figure \ref{picbrin} helps us to fix the scene.

\begin{proof}
	Let $R>d_{W^s}(z_i,z_{i+1})$ and $V$ be a neighborhood of $z_{i+1}$. By the continuity of $W^s$, there is a $\delta>0$ small enough such that for all $z\in B_{\delta}(z_i)$, we have $W^s_R(z)\cap V\neq\varnothing$.
	
	By hypothesis, there exists $k\in\mathbb{N}$ such that $U=f^k(B_{\varepsilon}(x))\cap B_{\delta}(z_i)$ is an open and nonempty set. The set $W^s_R(U)\cap V$ has a non-empty interior by the construction of $B_{\delta}(z_i)$. Denote $A=\operatorname{Int}(W^s_R(U)\cap V)$.
	
	Using Lemma \ref{Lema-reconr-residual}, consider $a\in R(f)\cap A$ and take $\alpha>0$ small enough such that $B_{\alpha}(a)\subset A$. Since $a$ is a recurrent point, there is a sequence $0<n_1<n_2<\cdots$ such that $f^{n_j}(a)\in B_{\alpha/2}(a)$. Since $a\in W^s_R(z)$ for some $z\in U$, there exists $j$ large enough such that $d(f^{n_j}(a),f^{n_j}(z))<\alpha/2$. This implies $d(f^{n_j}(z),a)  <  \alpha.$
	Therefore, $f^{n_j}(z)\in B_{\alpha}(a)\subset V$. Finally, since $z=f^k(b)$ for some $b\in B_{\varepsilon}(x)$, we have that $f^{k+n_j}(b)\in V$.
\end{proof}

	\begin{center}
\begin{figure}
	\includegraphics[scale=1]{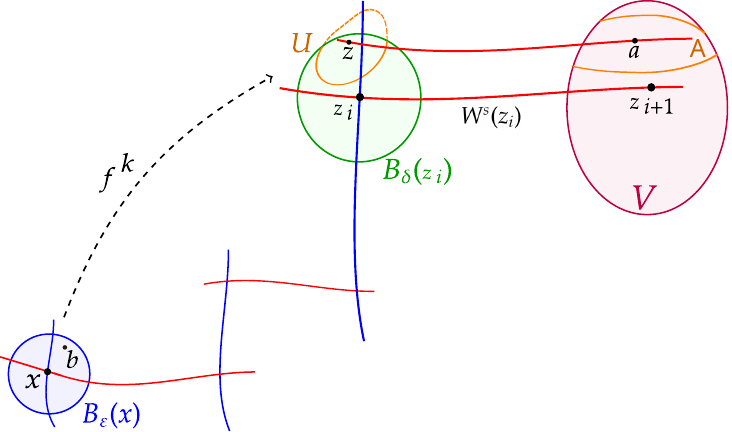}
\caption{Accessibility and transitivity}
	\label{picbrin}
\end{figure}
\end{center}

The key challenge lies in handling unstable leaves (see Lemma \ref{accu} below), since partially hyperbolic endomorphisms do not necessarily admit a well-defined unstable foliation on $M$. Unlike stable leaves, which are uniquely determined at each $x \in M$, unstable leaves depend on the choice of preorbit $\bar{x} \in M^f$ (with $p(\bar{x}) = x$). To overcome this issue, we work with projections of pieces of unstable leaves from the universal cover $\widetilde{M}$. In this cover, the lifted dynamics $\widetilde{f}$ acts as a diffeomorphism, ensuring locally consistent unstable manifolds that can be used to approximate the unstable leaves $W^u(\bar{x})$ on $M$.

\begin{lemma}\label{accu}
	Let $x = z_0, \ldots, z_n = y$ be as in Definition \ref{accdef}. Suppose that $z_{i+1} \in W^u(\bar{z}_i)$ such that $p(\bar{z}_i) = z_i$. If for any open set $U \subset M$ with $z_i \in U$, there is an integer $k = k(U) \geq 0$ such that $f^k(B_{\varepsilon}(x)) \cap U \neq \emptyset$, then there exists an integer $\hat{k} \geq 0$ such that $f^{\hat{k}}(B_{\varepsilon}(x)) \cap B_{\varepsilon}(z_{i+1}) \neq \emptyset$.
\end{lemma}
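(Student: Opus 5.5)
The plan is to mimic the stable case (Lemma \ref{accs}), but to carry out the recurrence argument in the inverse limit space $M^f$ rather than in $M$. The reason is that an unstable plaque depends on a whole preorbit, and the plaques being compared have to be matched through their preorbits. The underlying mechanism is also different. In the stable case a point of $V$ is reached by shadowing a forward orbit. Here a tiny piece of unstable plaque near $z_i$, lying inside $f^k(B_\varepsilon(x))$, will be iterated forward until it is stretched into a long plaque that crosses $V$.

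\textbf{Step 1 (choice of constants).} Fix a neighborhood $V$ of $z_{i+1}$ and choose $R>d_{W^u}(z_i,z_{i+1})$. I will use continuity of local unstable manifolds in $M^f$, the partially hyperbolic analogue of Theorem \ref{PRZ-1} (see \cite{CM22}). This gives $\delta>0$ with the following property: if $\bar w\in M^f$ satisfies $\bar d(\bar w,\bar z_i)<\delta$, then $W^u_R(\bar w)\cap V\neq\varnothing$, and $W^u_R(\bar w)$ crosses $V$ along an open subset of the plaque.

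\textbf{Step 2 (recurrence in $M^f$).} First I check that $\Omega(f)=M$ implies $\Omega(\bar f)=M^f$. A basic neighborhood of $\bar w$ in $M^f$ can be taken to be a cylinder $\{\bar v:\ v_{-m}\in W\}$, where $W$ is a small open set around $w_{-m}$. Because $f$ is a local homeomorphism, the coordinates $v_{-m+1},\dots,v_0$ are then automatically close to $w_{-m+1},\dots,w_0$. Non-wandering of $w_{-m}$ for $f$ then gives a return of the cylinder under $\bar f$. Since $M^f$ is compact metric, Lemma \ref{Lema-reconr-residual} applied to $\bar f$ shows that $\bar f$-recurrent points are dense in $M^f$.

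Next, the set $\pi^{-1}(f^k(B_\varepsilon(x))\cap B_{\delta}(z_i))\cap B_\delta(\bar z_i)$ is open. To see that it is nonempty for suitable $k$, I use the hypothesis on the neighborhoods $U$ of $z_i$, but applied to $z_{i,-m}$ for large $m$. The sets $f^k(B_\varepsilon(x))$ are open, and $f^m$ maps a small neighborhood of $z_{i,-m}$ homeomorphically near $z_i$. This should produce points whose preorbit follows $\bar z_i$ for $m$ steps; it is the only point where the hypothesis must be strengthened slightly, and I would verify it carefully. Then pick an $\bar f$-recurrent $\bar a$ in this open set, with $\bar f^{n_j}(\bar a)\to\bar a$.

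\textbf{Step 3 (expansion).} For large $j$ we have $\bar d(\bar f^{n_j}\bar a,\bar z_i)<\delta$, so $W^u_R(\bar f^{n_j}\bar a)$ meets $V$ by Step 1. Pulling back along the preorbit $\bar f^{n_j}\bar a$, which passes through $a$, uniform expansion on $E^u$ shows that $W^u_R(\bar f^{n_j}\bar a)$ is the $f^{n_j}$-image of a plaque $D\subset W^u_{C\mu^{-n_j}R}(\bar a)$. For $j$ large, $D$ lies inside the open set $f^k(B_\varepsilon(x))$. Hence some $b\in B_\varepsilon(x)$ satisfies $f^{k+n_j}(b)\in V$, and we take $\hat k=k+n_j$.

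The main obstacle is the orbit dependence of unstable plaques. Doing the recurrence in $M^f$ is what guarantees that the plaque $W^u_R(\bar f^{n_j}\bar a)$ is close to $W^u_R(\bar z_i)$, rather than to some unrelated unstable plaque through a nearby point. If the continuity of plaques in $M^f$ turns out to be awkward to justify, I would fall back on the universal cover $\widetilde M$, as announced before the lemma. There $\tilde f$ is a diffeomorphism with a genuine unstable foliation, and the same argument can be run with lifts and deck translations.
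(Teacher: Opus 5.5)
Your argument is correct, and it takes a different and cleaner route than the paper's.

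\textbf{The paper's route.} The paper works entirely in $M$. It applies Lemma \ref{Lema-reconr-residual} to $f$ itself and projects a local unstable foliation from the universal cover $\widetilde M$, so that every point near $z_i$ carries a plaque. It controls the dependence on the preorbit only for $N$ steps. To do this it picks $\theta$ in the residual set $R(f)\cap U\cap f^N(R(f)\cap V_N)$, so that the returns are routed through a neighborhood $V_N$ of $z_{i,-N}$. The graph transform then makes the iterated $R$-discs $C^1$-close to $W^u_R(\bar z_i)$.

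\textbf{Your route.} You prove $\Omega(\bar f)=M^f$ and apply the same lemma to the shift $\bar f$. Recurrence of $\bar a$ in $M^f$ then controls the entire preorbit at once. Continuity of $\bar w\mapsto W^u_R(\bar w)$ on $M^f$ gives the closeness to $W^u_R(\bar z_i)$ directly. Pulling back along the inverse branch by uniform contraction replaces both the projected foliation and the double residual intersection. The only ingredient you need is continuity of unstable plaques over $M^f$, which the paper also uses (``by the graph method''). So the universal cover is unnecessary and your fallback is not needed.

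\textbf{A further advantage.} Your tiny plaque $D\ni a_0$ inside the open set $f^k(B_\varepsilon(x))$ also avoids the paper's step of taking a neighborhood of $z_i$ contained in $f^k(B_\varepsilon(x))$. The stated hypothesis does not literally provide such a neighborhood.

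\textbf{The point you flagged in Step 2.} It needs no strengthening of the hypothesis. Let $W$ be a small neighborhood of $z_{i,-m}$ on which $f^m$ is injective. Then $f^m(W)$ is an open neighborhood of $z_i$, so the hypothesis itself yields $k$ and a point $w_0=f^m(w')\in f^k(B_\varepsilon(x))$ with $w'\in W$. Take the partial preorbit $w',f(w'),\dots,w_0$, extend it arbitrarily further back, and continue forward by $f$. The resulting element of $M^f$ is $\bar d$-close to $\bar z_i$ once $m$ is large and $W$ is small. Hence your open set in $M^f$ is nonempty exactly as the lemma is stated.
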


\begin{proof}
	First, we recall that for the partially hyperbolic system $f$, the unstable manifolds $W^u_f(\bar{x})$, where $\bar{x} = (\ldots, x_{-2}, x_{-1}, x_0, x_1, x_2, \ldots) \in M^f$, can be constructed using the graph method, as outlined in \cite{HPS}. Moreover, the lifts of $f$ are partially hyperbolic diffeomorphisms of $\widetilde{M}$ (see \cite{CM22}), for which both unstable and stable manifolds can be constructed. Additionally, the complete orbits of $\widetilde{f}$ project onto complete orbits of $M^f$.
	
	Fix a lift $\tilde{f}$. The unstable manifolds vary continuously in the $C^1$ topology by the graph method. The density in $M^f$ of orbits projected from orbits of $\widetilde{M}^{\tilde{f}}$ ensures that the unstable manifolds of $f$ can be $C^1$-approximated by the projections of the unstable manifolds of $\tilde{f}$ onto $M$ from the universal cover $\widetilde{M}$. Given a fixed $R > 0$, this approach can be carried out by considering unstable discs of constant radius.
	
	Project an unstable leaf $W^u(z_i) = \pi(W)$ from the universal cover $\widetilde{M}$, ensuring it is $C^1$-close to $W^u(\bar{z}_i)$ and intersects $B_{\varepsilon}(z_{i+1})$ at a point $w_{i+1}$ arbitrarily close to $z_{i+1}$.
	
	Let $U$ be a sufficiently small neighborhood of $z_i$ contained in $f^k(B_{\varepsilon}(x))$. Project a $u$-foliated set from $\widetilde{M}$ to $M$, such that unstable leaves starting in $U$ cross $B_{\varepsilon}(z_{i+1})$. By continuity, there exists $R > 0$ such that every $R$-radius unstable disc centered in $U$ intersects $B_{\varepsilon}(z_{i+1})$.
	
	For $\hat{z}_i = (\ldots, z_{i, -3}, z_{i, -2}, z_{i, -1}, z_{i, 0}, z_{i, 1}, z_{i, 2}, \ldots) \in M^f$ such that $p(W^u(\hat{z}_i)) = W^u(z_i)$, choose a large integer $N > 0$ and an open neighborhood $V_N \subset M$ of $z_{i, -N}$ such that $f^N(V_N) = U$. Define $V_N' = R(f) \cap V_N$. Since $f$ is a local diffeomorphism, $f^N(V_N')$ is residual in $U$. Moreover, taking $U' = R(f) \cap U$, we know that $U'$ is also residual in $U$. Consequently, the intersection $U'' = U' \cap f^N(V_N')$ remains residual in $U$. Observe that for a given point $\theta \in U''$, there is an increasing sequence $(n_j)_{j=1}^{+\infty}$ of positive integers such that $f^{n_j}(\theta) \in U$, and consequently $f^{n_j - N}(\theta) \in V_N$.
	
	To finish the proof, choose a point $\theta \in U''$ and let $W^u(\theta) \cap U$ be the connected component of the leaf of the previously constructed foliation that contains $\theta$. Consider the large unstable discs $f^{n_j - N}(W^u(\theta) \cap U)$. Since $N$ was taken enough large, the $R$-disc of $f^N(f^{n_j - N}(W^u(\theta) \cap U))$ centered at $f^{n_j}(\theta)$ is exactly the $R$-disc of $f^{n_j}(W^u(\theta))$ centered at $f^{n_j}(\theta)$. By the graph method, this is $C^1$-close to the $R$-disc $W^u(z_i)$ centered at $z_i$. Thus, for sufficiently large values of $j > 0$, we have $f^{n_j}(W^u(\theta) \cap U) \cap B_{\varepsilon}(z_{i+1}) \neq \emptyset$.
	
	Finally, this implies that $f^{k + n_j}(B_{\varepsilon}(x)) \cap B_{\varepsilon}(z_{i+1}) \neq \emptyset$ for large $j > 0$.
\end{proof}

\begin{proof}[Proof of Theorem \ref{accBrin}]
	
	To prove topological transitivity, we show that for any $x, y \in M$ and $\varepsilon > 0$, there exists an integer $k = k(x,y, \varepsilon) \geq 0$ such that
	$$
	f^k(B_{\varepsilon}(x)) \cap B_{\varepsilon}(y) \neq \emptyset.
	$$
	
	Since $f$ is accessible, there is a $C^0$-path connecting $x$ and $y$, composed of alternating stable and unstable segments. The theorem then follows by iteratively applying Lemmas \ref{accs} and \ref{accu} along this $C^0$-path from $x$ to $y$.
\end{proof}

\section{Proofs of theorems \ref{TeoC} and \ref{TeoD}}

\begin{proof}[Proof of Theorem \ref{TeoC}]
	Let $U=\operatorname{Int}(\Omega(f))$. Following the same arguments as in the proof of Theorem \ref{TeoA}, for all $x\in \overline{U}$, there is an orbit $\bar{x}$ such that its unstable leaf $W^u(\bar{x})$ is entirely contained in $\overline{U}$. Because $f$ is $u$-special, the unstable leaf $W^u(x)$ is unique and independent of the chosen prehistory of $x$. Similarly, we can deduce that the stable $W^s(x)$ is also contained in $ \overline{U}$.
	
	 We now claim that $\overline{U}=M.$ To see this, let $x\in M$ be an arbitrary point. Since $f$ is accessible, for any fixed $y\in \overline{U}$, there exists a finite path consisting of stable and unstable segments connecting $x$ to $y$. This means there is a sequence of points $y=y_0,y_1,y_2\ldots,y_n=x$ such that $y_{i+1}\in W^s(y_{i})$ or $y_{i+1}\in W^u(y_{i})$ for each $i=0,\ldots, n-1$. Since we have already established that the stable and unstable leaves of any point in $\overline{U}$ remain in $\overline{U}$, an inductive argument starting from $y_0$ shows that $y_i\in \overline{U}$ for $i=0,\ldots,n$. In particular, $x=y_n\in M$. Since $x$ was arbitrary, we conclude that  $\overline{U}=M$, which yields $\Omega(f)=M$. Finally, by Theorem \ref{accBrin}, it follows that $f$ is transitive.
\end{proof}

Finally, we will prove Theorem \ref{TeoD}.

\begin{proof}[Proof of Theorem \ref{TeoD}]
Denote $U=\operatorname{Int}(\Lambda)$. Because $\Lambda$ is completely $f$-invariant, Lemma \ref{Lem00} ensures that both $U$ and $\overline{U}$ are also completely $f$-invariant. Consequently, $\pi^{-1}(U)=U^f$ and $\bar{f}(U^f)=U^f=\bar{f}^{-1}(U^f)$, with analogous relations holding for $\overline{U}$ and $\Lambda$. Moreover, by Theorem \ref{Teo-transitive}, the transitivity of $f\vert_{\Lambda}$ implies that the induced map $\bar{f}\vert_{\Lambda^f}$ is also transitive.

Since $M^f$ is a compact separable metric space, there exists a point $\bar{y}\in U^f$ whose forward orbit $\mathcal{O}^+(\bar{y})=\{\bar{f}^{n}(\bar{y});n\in\N\}$ and backward orbit $\mathcal{O}^-(\bar{y})=\{\bar{f}^{-n}(\bar{y});n\in\N\}$ are both dense in $U^f$.

Now, consider an arbitrary point $x\in U$ and a prehistory $\bar{x}\in U^f$. Since $\mathcal{O}^+(\bar{y})$ is dense, we can find a sequence $(n_k)$ such that $\bar{f}^{n_k}(\bar{y})\to \bar{x}$. Choose $\eta>0$ small enough so that $B_{\eta}(x)\subset U$. For $l$ sufficiently large, the unstable disc $D^u_{\eta/2}:=D^u_{\eta/2}(\bar{f}^{n_l}(\bar{y}))$ is fully contained in $U$. Due to the complete invariance of $U$, the forward iterates of this disc remain in $U$, and their lengths grow indefinitely. Applying Theorem \ref{PRZ-1}, we see that these discs converge to $W^u(\bar{x})$. Given that $\overline{U}$ is invariant, it follows that $W^u(\bar{x})$ must be entirely contained in $\overline{U}$.

By a similar argument, the same holds for the stable leaf: by using the density of the backward orbit $\mathcal{O}^-(\bar{y})$ and taking a sequence $\bar{f}^{-m_k}(\bar{y})\to \bar{x}$, we conclude that $W^s(x)$ is also entirely contained in $\overline{U}$. Finally, because $f$ is accessible, we can argue exactly as in the proof of Theorem \ref{TeoC} to deduce that $M=\overline{U}\subset \Lambda$. This yields $M=\Lambda$, which concludes the proof that $f$ is transitive.

\end{proof}

%####################################################################
%####################################################################

\end{document}